  \newcommand{\E}{\mathcal{E}}
  \newcommand{\R}{\mathcal{R}}  
  \newcommand{\A}{\mathcal{A}}
  \newcommand{\D}{\mathcal{D}}
\xpatchcmd{\proof}{\@addpunct{.}}{\normalfont\,\@addpunct{:}}{}{}
  \newtheoremstyle{dotless}{}{}{\itshape}{}{\bfseries}{:}{ }{}
    \newtheoremstyle{dotlessrem}{}{}{}{}{\bfseries}{:}{ }{}
  \theoremstyle{dotless}
\newtheorem{lemma}{Lemma}[section]
\newtheorem{theorem}[lemma]{Theorem}
 \theoremstyle{dotlessrem}
\renewenvironment{abstract}
 {\small
  \begin{center}
  \bfseries \abstractname\vspace{-.5em}\vspace{0pt}
  \end{center}
  \list{}{
    \setlength{\leftmargin}{1.5cm}%
    \setlength{\rightmargin}{\leftmargin}%
  }%
  \item\relax}
 {\endlist}
\newcommand{\pushright}[1]{\ifmeasuring@#1\else\omit\hfill$\displaystyle#1$\fi\ignorespaces}
\begin{document}
\begin{center}
\Large\textbf{Oscillations on the Stretched Sierpinski Gasket}\\
\large Elias Hauser\footnote{Institute of Stochastics and Applications, University of Stuttgart, Pfaffenwaldring 57, 70569 Stuttgart, Germany, E-mail: elias.hauser@mathematik.uni-stuttgart.de}\end{center}
\begin{abstract}
In \cite{haus17} we calculated the leading term in the asymptotics of the eigenvalue counting function for operators coming from a completely symmetric resistance form on the Stretched Sierpinski Gasket (SSG). These resistance forms were introduced in \cite{afk17}. In this work we want to refine the results from \cite{haus17}. The next question that arises is if there are oscillations in the leading term which are typical for highly symmetrical fractals. The SSG is not self-similar but it still exhibits very high symmetry. We have to distinguish between the existence of a periodic function in front of the leading term and oscillations in general. The first one is unlikely as we will see, however the second one still holds. This means there are oscillations in the leading term, but these will not have this very strict periodic behaviour that we know of the Sierpinski Gasket. We will show, that there exist localized eigenfunctions on the SSG which have eigenvalues with very high multiplicities. However in contrast to the self-similar case this fact alone is not enough to show that there can't be convergence. We need to use another method to show the existence of oscillations.
\end{abstract}


\section{Introduction}

Spectral asymptotics is an important tool in physics, for example to understand how heat or waves propagate through media. To answer this questions we need a laplacian. On fractals we can construct such operators with the help of resistance forms. These were introduced by Kigami in \cite{kig03}. If we add a locally finite Borel measure with full support we get Dirichlet forms and thus self-adjoint operators. We are interested in the eigenvalues of these operators. In particular in the asymptotic growing of the eigenvalue counting functions. For the Sierpinski Gasket Shima \cite{shim} and Fukushima-Shima \cite{fushim} calculated the eigenvalues via the eigenvalue decimation method. They found out that
\begin{align}
0<\liminf_{x\rightarrow\infty}N_D(x)x^{-\frac 12 d_S} \leq \limsup_{x\rightarrow\infty} N_D(x)x^{-\frac 12 d_S}<\infty \label{eqasym}
\end{align}
with $d_S=\frac{\ln 9}{\ln 5}$. This disproved a conjecture by Berry in \cite{ber1,ber2} where he proposed that the exponent $d_S$ in the asymptotic growing should be the Hausdorff dimension of the set as a direct generalization of the smooth case of laplacians on open bounded sets $\Omega$ in $\mathbb{R}^n$. From Weyl \cite{we11} we know, that in this case $\liminf$ and $\limsup$ coincide and 
\begin{align*}
\lim_{x\rightarrow\infty} N_D^\Omega(x)x^{-\frac n2}=\frac{\tau_n}{(2\pi)^n}\operatorname{Vol}_n(\Omega)
\end{align*}
where $\tau_n$ is the volume of the unit ball in $\mathbb{R}^n$.

$d_S$ in (\ref{eqasym}) was later calculated for a larger set of fractals by Kigami and Lapidus \cite{kl93}, namely the so called p.c.f. self-similar sets. Moreover in very symmetric cases they showed that there is periodic behaviour in front of the leading term. This periodic function, however, could still be constant. For the Sierpinski Gasket Shima and Fukushima-Shima \cite{shim,fushim} showed with spectral decimation that 
\begin{align}
\liminf_{x\rightarrow\infty}N_D(x)x^{-\frac 12 d_S} < \limsup_{x\rightarrow\infty} N_D(x)x^{-\frac 12 d_S} \label{eqasym2}
\end{align}
But this method is only valid for a very restricted set of fractals. Later Barlow and Kigami showed (\ref{eqasym2}) in \cite{bk97} by using arguments that utilize the symmetry of the set. They showed the existence of localized eigenfunctions. These are eigenfunctions that are only supported on a proper subset. This is a very interesting phenomena. Such localized eigenfunctions give us solutions to the heat and wave equation where the heat and energy stay in the subset. This represents for example perfect heat insulated or sound proofed rooms. The associated eigenvalues have very high multiplicities, which lead to high jumps in the eigenvalue counting function. We can use this to show (\ref{eqasym2}). \\

In this work we consider the Stretched Sierpinski Gasket (also Hanoi attractor) which is a non self-similar but still highly symmetric set. In \cite{haus17} we calculated the leading term of the asymptotics. That means we showed (\ref{eqasym}) and calculated $d_S$. We are now interested in the refinements. The goal is to see if periodicity 
still holds and if (\ref{eqasym2}) is true. We will use the method of \cite{bk97} and look for localized eigenfunctions with high multiplicities. \\

This work is structured as follows. In chapter~\ref{chap2} we will define the Stretched Sierpinski Gasket and briefly review the construction of so called completely symmetric resistance forms which were introduced in \cite{afk17}. We then construct measures to get Dirichlet forms and state the results concerning the leading term in the asymptotics of the eigenvalue counting function. In chapter~\ref{chapsirp} we want to look again at the Sierpinski Gasket and review the results on periodicity and oscillations to be able to compare them to the Stretched Sierpinski Gasket. In chapter~\ref{chapmeas} we analyze the measures and in chapter~\ref{chapper} the energy. We follow the ideas of chapter~\ref{chapsirp} and show why renewal theory isn't applicable and periodicity in general is unlikely. Chapter~\ref{chaple} is the main part of this work. We show the existence of localized eigenfunctions and get estimates on the associated eigenvalues. This gives us a sequence of eigenvalues with which we can show (\ref{eqasym2}). We want to emphasize that (\ref{eqasym2}) doesn't need strict periodicity as for the Sierpinski Gasket. We have oscillations but they may be not as regular and periodic as in the self-similar case. We close this work in chapter~\ref{special} where we introduce more symmetry. These are special cases of the resistance forms from \cite{afk17}. With the additional symmetry we can show that in these cases periodicity holds.

\section{Stretched Sierpinski Gasket: Definition, Dirichlet forms and spectral asymptotics}\label{chap2}
\subsection{Definition Stretched Sierpinski Gasket}
The Stretched Sierpinski Gasket is a non self-similar set, that still exhibits a lot of symmetry. In \cite{af12} the set was analyzed geometrically by Alonso-Ruiz and Freiberg by calculating its Hausdorff dimension. Another name is Hanoi attractor. This is due to its connection to the game "The towers of Hanoi", which can also be found in \cite{af12}.

Let $p_1,p_2,p_3$ be the vertex points of an equilateral triangle with side length 1 and for $\alpha \in(0,1)$
\begin{align*}
G_i(x)&:=\frac{1-\alpha}2(x-p_i) +p_i, \ i\in \{1,2,3\}=:\mathcal{A}	\\[0.1cm]
e_1&:=\{\lambda G_2(p_3)+(1-\lambda)G_2(p_3) \ : \lambda \in  (0,1)\}\quad e_2,e_3 \ \text{analogous}
\end{align*}
Then there exists a unique compact set $K_\alpha$ with
\begin{align*}
K_\alpha = G_1(K_\alpha) \cup G_2(K_\alpha) \cup G_3(K_\alpha) \cup e_1\cup e_2\cup e_3
\end{align*}
This set is called \textit{Stretched Sierpinski Gasket (SSG)} since the contraction ratios are smaller than the ones of the Sierpinski Gasket and the gaps are filled with one-dimensional lines (see Figure~\ref{hanoi}).

The sets $K_\alpha$ for $\alpha\in(0,1)$ are pairwise homeomorphic \cite[Prop. 3.4]{afk17} and since the resistance forms only depend on the topology of $K_\alpha$ we can omit the parameter $\alpha$ in the notation.
\begin{figure}[H]
\centering
\includegraphics[scale=0.1]{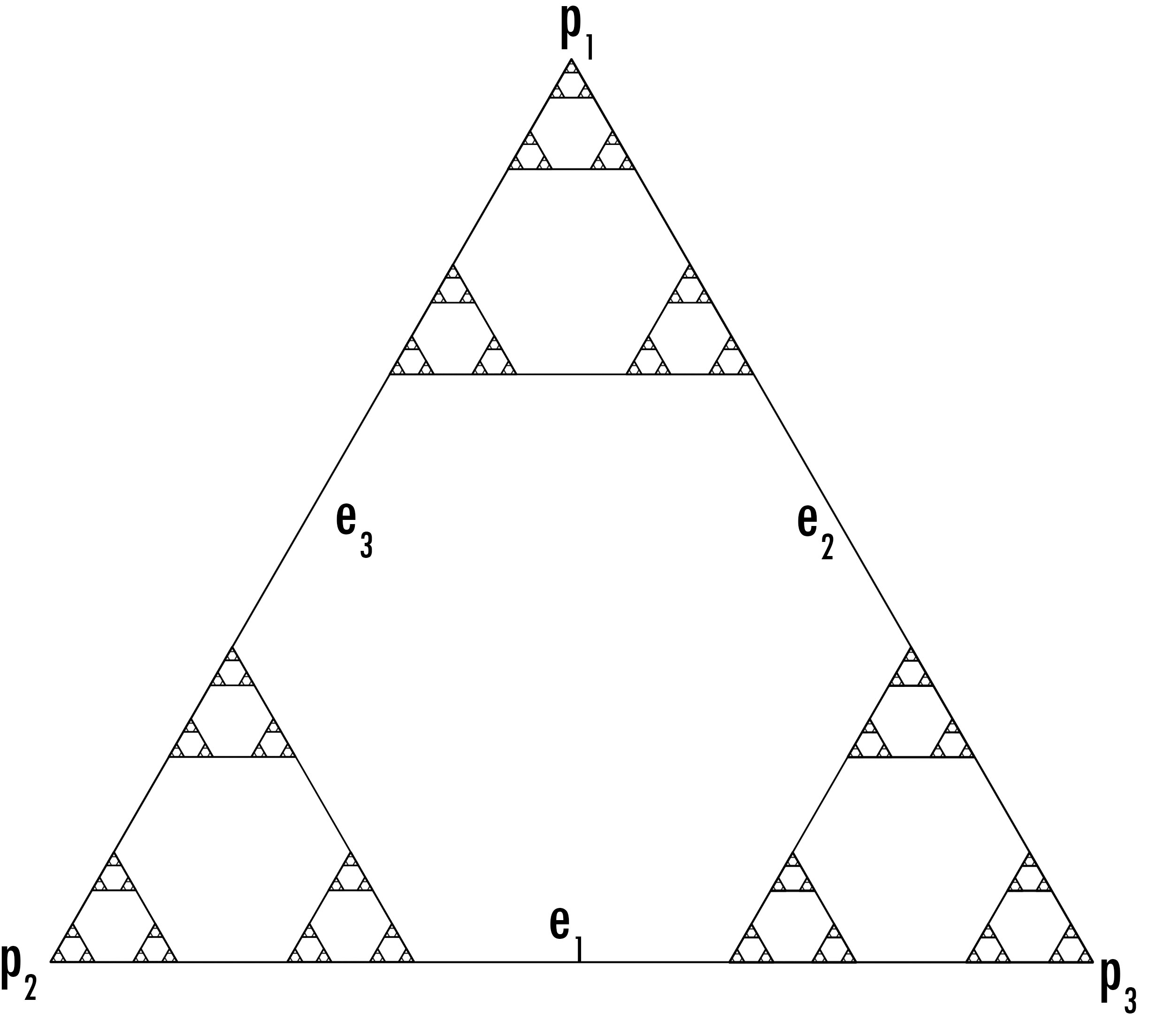}
\caption{The Stretched Sierpinski Gasket.}
\label{hanoi}
\end{figure}
We need to introduce some common notation. Let $\mathcal{A}:=\{1,2,3\}$, for $w\in\mathcal{A}^m$ with $m\in\mathbb{N}_0$:
\begin{itemize} 
\item $G_w=G_{w_1}\circ\ldots \circ G_{w_m}$ (with $G_w=\operatorname{id}$ for the empty word $w\in\mathcal{A}^0$)
\item $V_0:=\{p_1,p_2,p_3\}$,  $V_m:=\bigcup_{w\in \mathcal{A}^m}G_w(V_0)$ 
\item $e_i^w:=G_w(e_i)$
\item $K_w:=G_w(K)$,  $K_m:=\bigcup_{w\in\mathcal{A}^m}K_w$
\item $J_m:=K\backslash K_m$
\end{itemize}
\subsection{Resistance forms on SSG}
To be able to study analysis on the Stretched Sierpinski Gasket we need to introduce a resistance form on $K$. A definition of resistance forms can be found in \cite{kig12}. The choice of the resistance form is not unique and so we get different operators and different spectral asymptotics. The construction of these resistance forms was carried out in \cite{afk17}. The following paragraph will include a brief recapitulation of this construction.
\begin{figure}[H]
\centering
\includegraphics[scale=0.15]{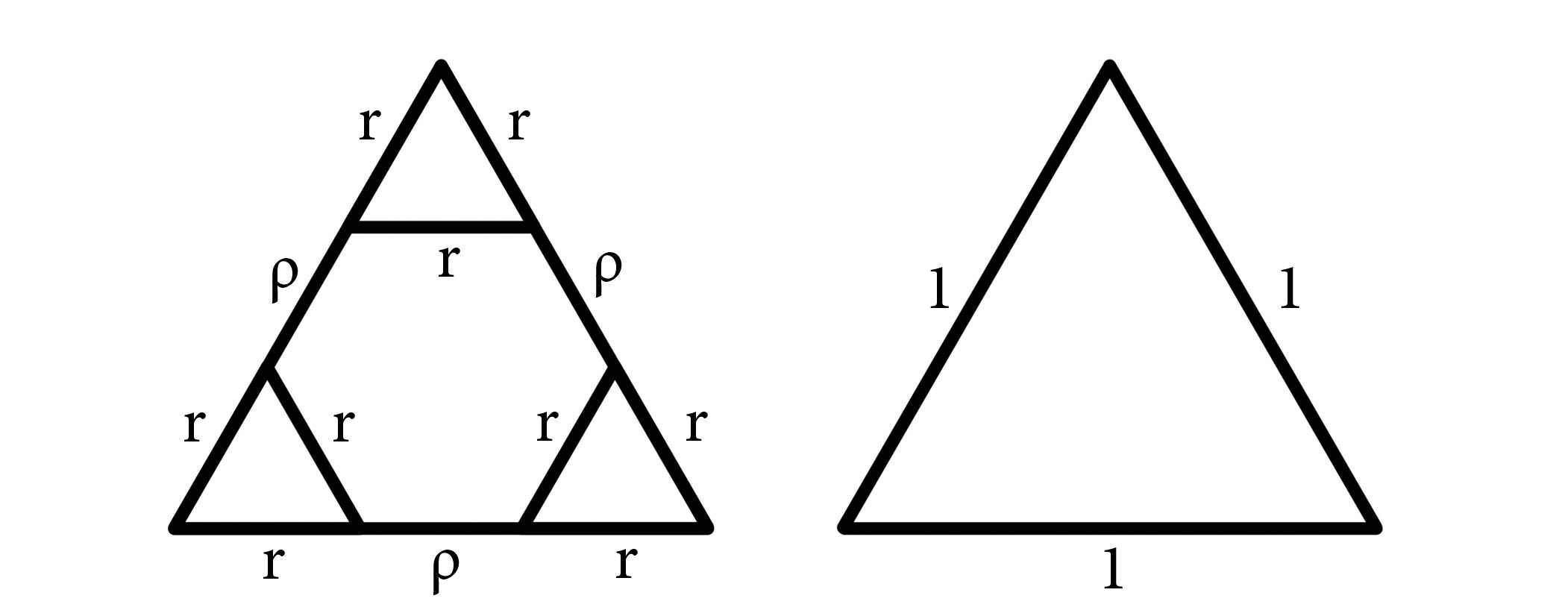}
\caption{Resistances in the first graph approximation.}
\label{resis}
\end{figure}
In Figure~\ref{resis} you can see the first graph approximation of $K$ beside the graph that just contains the connected vertices $p_1,p_2$ and $p_3$. Due to symmetry we want to have the resistances on the smaller triangles all equal $r$ and also all equal $\rho$ on the edges adjoining them. This electrical network should be equivalent to the one on the right with all resistances equal $1$. A quick calculation with the help of the $\Delta-Y$-transformation leads to 
\begin{align*}
\frac 53 r+\rho=1
\end{align*}
Such a pair $(r,\rho)$ is then called a matching pair. In the next graph approximation the smaller triangles get divided further in the same fashion. 
\begin{figure}[H]
\centering
\includegraphics[scale=0.15]{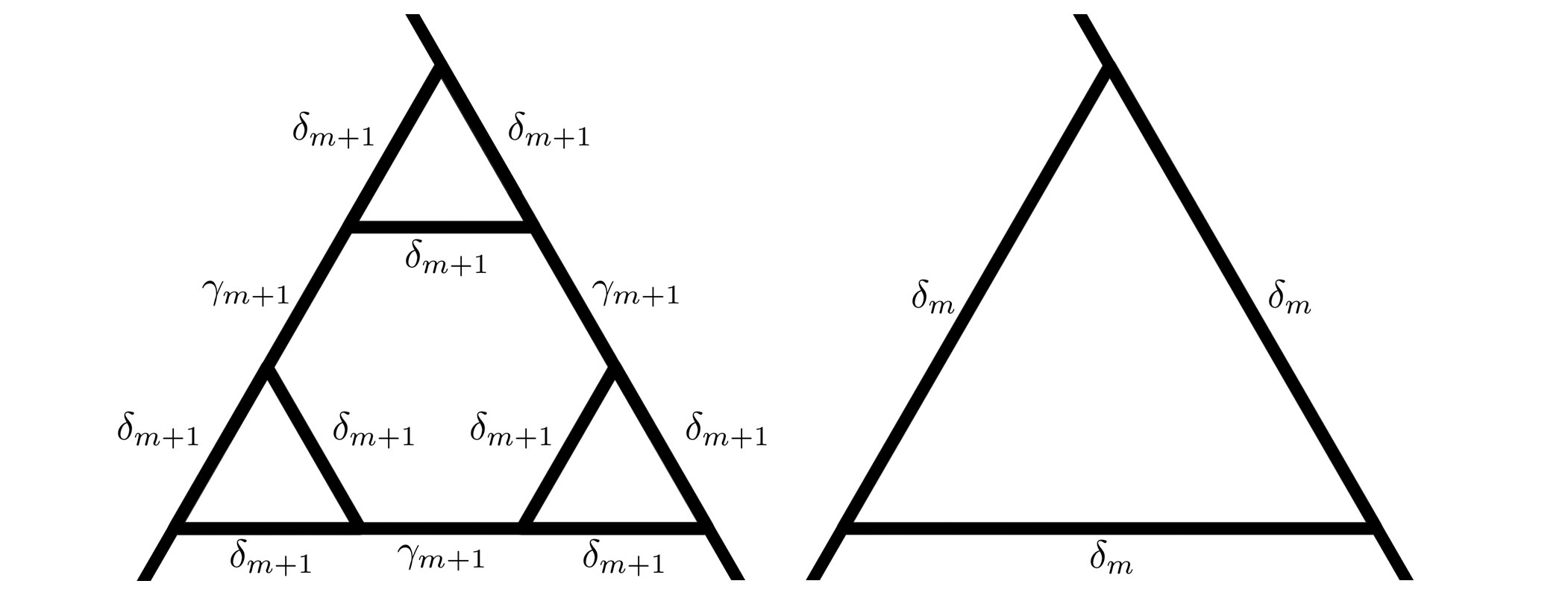}
\caption{Resistances in the $\protect m+1$ graph approximation.}
\label{resis2}
\end{figure}
In general, in the $m+1$ graph approximation, the left triangle in Figure~\ref{resis2} has to be equivalent to the right one with all resistances $\delta_m$. 
The same calculation as for the first graph approximation shows, that it has to hold that 
\begin{align*}
\delta_{m+1}=\delta_m\cdot r_{m+1} \hspace*{0.5cm} \text{and} \hspace*{0.5cm} \gamma_{m+1}=\delta_m\cdot\rho_{m+1}
\end{align*}
with a matching pair $(r_{m+1},\rho_{m+1})$ i.e. $\frac 53 r_{m+1}+ \rho_{m+1}=1$. Notice that the resistances of the edges connecting adjoining cells from the previous graph approximations do not change. We get for the $m$-th graph approximation, that
\begin{align*}
\delta_m=r_1\cdots r_m\hspace*{0.5cm} \text{and} \hspace*{0.5cm} \gamma_m=r_1\cdots r_{m-1} \rho_m
\end{align*}
with $\frac 53 r_i+\rho_i=1$ for all $i$. Such a sequence $\mathcal{R}=(r_i,\rho_i)_{i\geq 1}$ of matching pairs is also called a compatible sequence because each of those sequences will lead to a resistance form on $K$. 

With these resistances we can define a quadratic form. This form will consist of two parts. One part is very similar to the usual resistance form on the Sierpinski Gasket. For $u\in \ell(K)$ define
\begin{align*}
Q_0^\Sigma(u,u)&:=(u(p_1)-u(p_2))^2+(u(p_2)-u(p_3))^2+(u(p_3)-u(p_1))^2\\
Q_m^\Sigma(u,u)&:=\sum_{w\in\mathcal{A}^m} Q_0^\Sigma(u\circ G_w,u\circ G_w)\\[0.2cm]
\mathcal{E}^\Sigma_{\mathcal{R}}(u,u)&:=\lim_{m\rightarrow \infty} \frac 1{\delta_m}Q_m^\Sigma (u,u)
\end{align*}
However this form ignores the adjoining edges of the Stretched Sierpinski Gasket. To get a form on the whole $K$ we need a second part. This can be achieved with the usual one-dimensional Dirichlet energy summed over all edges.

With $\xi_{e^w_i}(t)=(1-t)(e^w_i)_-+t(e^w_i)^+$, $t\in (0,1)$, where $(e^w_i)_-$ and $(e^w_i)^+$ are the endpoints of $e^w_i$, we define
\begin{align*}
\mathcal{D}^I_k(u,u)&:=\sum_{ \begin{array}{c} w\in\mathcal{A}^{k-1}\\ i\in\{1,2,3\}\end{array} } \int_0^1 \left(\frac{d(u\circ \xi_{e^w_i})}{dx}\right)^2 dx\\[0.1cm]
\mathcal{E}^I_{\mathcal{R}}(u,u)&:=\sum_{k=1}^\infty \frac 1{\gamma_k} \mathcal{D}^I_k(u,u)
\end{align*}
Now we define the sum of the two parts as our final quadratic form:
\begin{align*}
\mathcal{E}_{\mathcal{R}}(u,u):=\mathcal{E}^\Sigma_{\mathcal{R}}(u,u)+ \mathcal{E}^I_{\mathcal{R}}(u,u)
\end{align*}
The form $\mathcal{E}_{\mathcal{R}}$ is defined on
\begin{align*}
\mathcal{F}_{\mathcal{R}}=\left\{u\in C(K) : \mathcal{E}_{\mathcal{R}}(u,u)<\infty, \  u|_{e^w_i}\in H^1(e_i^w), \forall i\in\A, w\in\mathcal{A}^m, m\in\mathbb{N}_0\right\}
\end{align*}
where $H^1(e_i^w)=\{u\in \ell(e_i^w), u\circ \xi_{e_i^w}\in H^1(0,1)\}$. One of the results of \cite{afk17} is that for a sequence of matching pairs $\mathcal{R}=(r_i,\rho_i)_{i\geq 1}$ the form $(\mathcal{E}_{\mathcal{R}},\mathcal{F}_{\mathcal{R}})$ is indeed a regular resistance form (see \cite[Theorem 7.16]{afk17}).

The construction of these resistance forms can be studied in much greater detail in \cite{afk17}.
\subsection{Measures and Dirichlet forms}
Until now we have resistance forms. To get Dirichlet forms and thus operators on SSG we need to introduce measures. These measures have to be locally finite Borel-measures with full support. Since we are working on a compact set we are looking for finite measures.\\

We want to describe the measures as the sum of two parts. These parts represent the fractal and the line part in accordance to the geometric appearance of SSG. It is clear how the measure on the fractal part has to look like. We use the normalized Hausdorff measure on $K$ by distributing mass equally to the cells.
\begin{align*}
\mu_f(K_w):=\left(\frac 13\right)^{|w|}
\end{align*}
This is a finite measure but it does not have full support. It is only supported on a proper subset of $K$, namely the attractor of the similitudes $G_1,G_2,G_3$ alone. The measure $\mu_f$ is too rough to measure the one-dimensional lines. Therefore we need a second part on these lines. We want this line part of the measure to fulfil the same symmetries as $K$. We start by assigning some value $a>0$ to each of the edges $e_1,e_2$ and $e_3$.
\begin{figure}[H]
\centering
\includegraphics[width=0.6\textwidth]{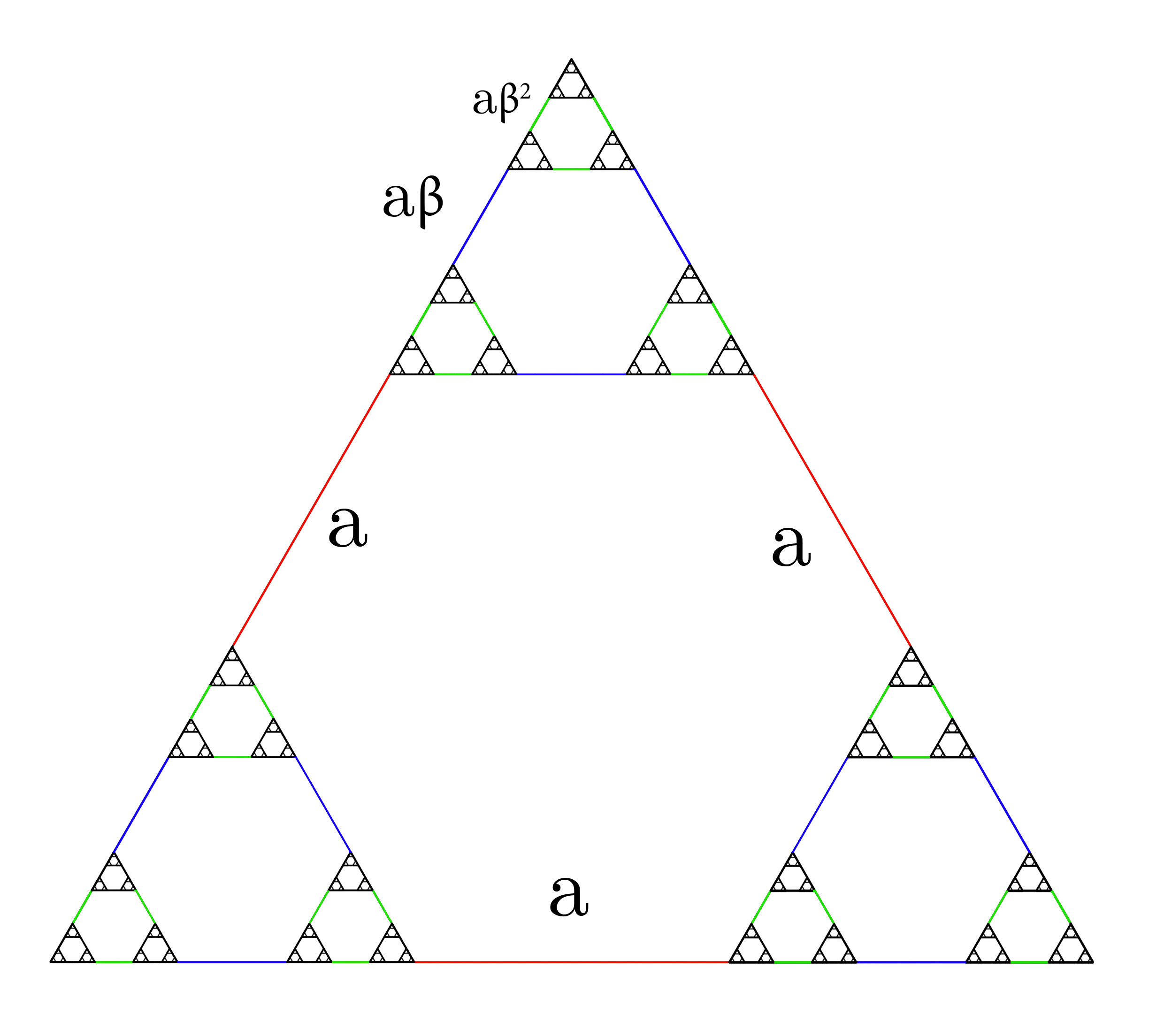}
\caption{Line part of the measure.}\label{hanoimass}
\end{figure}
Now each time the lines get smaller the measure gets scaled by some value $\beta>0$. It is easy to see, that $\beta$ has to be smaller than $\frac 13$ such that this gives us a finite measure. In general we define
\begin{align*}
\mu_l|_{e^w_i}:= a \beta^{|w|} \frac{\lambda^1}{\lambda^1(e^w_i)}
\end{align*}
This means it behaves like the one-dimensional Lebesgue measure on $e^w_i$ but it is normalized and then scaled by $a\beta^{|w|}$. Therefore it doesn't depend on the one-dimensional Lebesgue measure of $e^w_i$. We want $\mu_l$ to be normalized. Therefore we choose $a=\frac 13-\beta$. The construction is illustrated in Figure~\ref{hanoimass}.

Since the closure of the union of all one-dimensional lines is $K$, this measure fulfils the requirements. We can therefore use $\mu_l$ to get Dirichlet forms. The fractal part $\mu_f$ can not be used alone, but we can use any convex combination of these measures.
\begin{align*}
\mu_\eta:=\eta\mu_l+(1-\eta)\mu_f, \ \eta\in (0,1]
\end{align*} 
Note that $\eta=1$ is allowed. In this case we don't have a fractal part. This will be noticable in the spectral asymptotics.\\[.2cm]

With these measures we can define Dirichlet forms and therefore operators on $L^2(K,\mu_\eta)$.
Since $(K,R_\mathcal{R})$ is compact (see \cite[Theorem 7.16]{afk17}) we have the following result with \\ $\mathcal{D}_\mathcal{R}:=\overline{\mathcal{F}_\mathcal{R}\cap C_0(K)}^{\mathcal{E}_{_\mathcal{R},1}^{\frac 12}}=\mathcal{F}_\mathcal{R}$.\\
\begin{lemma} $(\mathcal{E}_\mathcal{R},\mathcal{D}_\mathcal{R})$ is a regular Dirichlet form on $L^2(K,\mu_\eta)$.\label{lem21}\end{lemma}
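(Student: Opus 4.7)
The plan is to invoke Kigami's general machinery (see \cite{kig12}) for passing from a regular resistance form to a regular Dirichlet form, since essentially all of the genuinely difficult work is already packaged in two external inputs: Theorem~7.16 of \cite{afk17}, which asserts that $(\mathcal{E}_\mathcal{R},\mathcal{F}_\mathcal{R})$ is a regular resistance form on $K$, and the fact that $(K,R_\mathcal{R})$ is compact. The task therefore reduces to checking that $\mu_\eta$ satisfies the hypotheses needed to apply that machinery, and to identifying the resulting domain with $\mathcal{F}_\mathcal{R}$.

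First I would verify that $\mu_\eta$ is a finite Borel measure with full support on $K$. Finiteness of $\mu_f$ is clear, and for $\mu_l$ a short geometric series computation gives $\mu_l(K)=\sum_{k\geq 0}3\cdot 3^k\cdot a\beta^k=\frac{3a}{1-3\beta}=1$ thanks to the choice $a=\tfrac13-\beta$ and $\beta<\tfrac13$. Hence any convex combination $\mu_\eta=\eta\mu_l+(1-\eta)\mu_f$ is a Borel probability measure. For full support I would observe that, by the fixed-point characterisation of $K$, iterating $K=G_1(K)\cup G_2(K)\cup G_3(K)\cup e_1\cup e_2\cup e_3$ shows $K=\overline{\bigcup_{m\in\mathbb{N}_0,\,w\in\mathcal{A}^m,\,i\in\mathcal{A}}e_i^w}$, so $\operatorname{supp}\mu_l=K$; since $\eta>0$, also $\operatorname{supp}\mu_\eta=K$.

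Next I would apply the general theorem of Kigami (e.g.\ Theorem~9.4 in \cite{kig12}) which states that a regular resistance form on a locally compact metric space, together with a Radon measure of full support, extends to a regular Dirichlet form on $L^2$ of that measure, with domain $\mathcal{D}_\mathcal{R}=\overline{\mathcal{F}_\mathcal{R}\cap C_0(K)}^{\mathcal{E}_{\mathcal{R},1}^{1/2}}$. Here compactness of $(K,R_\mathcal{R})$ gives $C_0(K)=C(K)$, and since every $u\in\mathcal{F}_\mathcal{R}$ is continuous on the compact set $K$, we have $\mathcal{F}_\mathcal{R}\subset C(K)\cap L^2(K,\mu_\eta)$ and $\mathcal{F}_\mathcal{R}\cap C_0(K)=\mathcal{F}_\mathcal{R}$. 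Because $\mathcal{F}_\mathcal{R}$ is complete under $\mathcal{E}_{\mathcal{R},1}^{1/2}$ (a feature of regular resistance forms), the closure on the right-hand side equals $\mathcal{F}_\mathcal{R}$, giving the asserted identification $\mathcal{D}_\mathcal{R}=\mathcal{F}_\mathcal{R}$.

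The main obstacle is not mathematical depth but bookkeeping: I need to match the precise statement of the Kigami theorem being cited (the Markovian property of the form, separation of points by $\mathcal{F}_\mathcal{R}$, and the uniform-norm density of $\mathcal{F}_\mathcal{R}$ in $C(K)$) to what has already been proved in \cite{afk17}. Once those ingredients are lined up, the conclusion is immediate and no further estimates are needed.
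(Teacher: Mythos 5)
Your argument is correct and is essentially the route the paper takes: the paper's proof is a citation to \cite[Lemma 5.1]{haus17}, and the surrounding text makes clear that the intended mechanism is exactly what you describe, namely compactness of $(K,R_\mathcal{R})$ plus the regular resistance form from \cite[Theorem 7.16]{afk17} fed into Kigami's resistance-form-to-Dirichlet-form machinery, with the identification $\mathcal{D}_\mathcal{R}=\overline{\mathcal{F}_\mathcal{R}\cap C_0(K)}^{\mathcal{E}_{\mathcal{R},1}^{1/2}}=\mathcal{F}_\mathcal{R}$ stated just before the lemma. Your verification that $\mu_\eta$ is a Borel probability measure of full support fills in precisely the hypotheses that need checking.
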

\begin{proof}
\cite[Lemma 5.1]{haus17}
\end{proof}
Introducing Dirichlet boundary conditions we get another Dirichlet form with $\mathcal{D}_\mathcal{R}^0:=\{u\in\mathcal{D}_\mathcal{R}: \ u|_{V_0}\equiv 0\}$.\\
\begin{lemma} $(\mathcal{E}_\mathcal{R}|_{\mathcal{D}_\mathcal{R}^0\times\mathcal{D}_\mathcal{R}^0}, \mathcal{D}_\mathcal{R}^0)$ is a regular Dirichlet form on $L^2(K,\mu|_{K\backslash V_0})$.\label{lem22}\end{lemma}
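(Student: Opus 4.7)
The plan is to verify the four properties of a regular Dirichlet form (densely defined, closed, Markovian, regular) for the restricted form, inheriting as much as possible from Lemma~\ref{lem21}. Since $V_0$ consists of three points and $\mu_\eta$ has no atoms, $\mu_\eta(V_0)=0$, so $L^2(K,\mu_\eta)$ and $L^2(K\setminus V_0,\mu_\eta|_{K\setminus V_0})$ agree canonically as Hilbert spaces. Closedness of $\mathcal{E}_\mathcal{R}$ on $\mathcal{D}_\mathcal{R}^0$ reduces to showing that $\mathcal{D}_\mathcal{R}^0$ is closed in $(\mathcal{D}_\mathcal{R},\mathcal{E}_{\mathcal{R},1}^{1/2})$, which is immediate from the resistance inequality $|u(x)-u(y)|^2\le R_\mathcal{R}(x,y)\mathcal{E}_\mathcal{R}(u,u)$ (finite since $(K,R_\mathcal{R})$ is compact): each evaluation at $p_i\in V_0$ is a bounded linear functional, so $\mathcal{D}_\mathcal{R}^0$ is the intersection of three closed hyperplanes. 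The Markov property is inherited because, for $u\in\mathcal{D}_\mathcal{R}^0$, the normal contraction $v=(0\vee u)\wedge 1$ lies in $\mathcal{D}_\mathcal{R}$, still satisfies $v|_{V_0}=0$, and obeys $\mathcal{E}_\mathcal{R}(v,v)\le\mathcal{E}_\mathcal{R}(u,u)$.

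The main step is regularity, i.e.\ density of $\mathcal{D}_\mathcal{R}^0$ in $C_0(K\setminus V_0):=\{f\in C(K):f|_{V_0}=0\}$ in sup norm and in itself in the $\mathcal{E}_{\mathcal{R},1}^{1/2}$-norm. The tool I would use is the harmonic extension: for every $\vec a=(a_1,a_2,a_3)\in\mathbb{R}^3$ there is a unique energy-minimizer $h_{\vec a}\in\mathcal{D}_\mathcal{R}$ taking the prescribed values $a_i$ at $p_i$, which obeys the maximum principle $\|h_{\vec a}\|_\infty\le\max_i|a_i|$ and has energy of the form $\mathcal{E}_\mathcal{R}(h_{\vec a},h_{\vec a})=\vec a^{\top}L\vec a$ for a non-negative symmetric $3\times 3$ matrix $L$. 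Given $f\in C_0(K\setminus V_0)$, Lemma~\ref{lem21} yields $g_n\in\mathcal{D}_\mathcal{R}$ with $g_n\to f$ uniformly, and then $u_n:=g_n-h_{(g_n(p_i))_i}\in\mathcal{D}_\mathcal{R}^0$ satisfies $u_n\to f$ uniformly since $g_n(p_i)\to f(p_i)=0$. For energy-one density inside $\mathcal{D}_\mathcal{R}^0$, take $u\in\mathcal{D}_\mathcal{R}^0$, approximate it by $g_n\in\mathcal{D}_\mathcal{R}$ in the $\mathcal{E}_{\mathcal{R},1}^{1/2}$-norm using the parent form's regularity, and apply the same subtraction: continuity of point evaluation forces $g_n(p_i)\to u(p_i)=0$, and then both $\|h_{(g_n(p_i))_i}\|_\infty$ and $\mathcal{E}_\mathcal{R}(h_{(g_n(p_i))_i},h_{(g_n(p_i))_i})$ tend to zero by the quadratic formula.

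Sup-norm density of $\mathcal{D}_\mathcal{R}^0$ in $C_0(K\setminus V_0)$, combined with the fact that $C_0(K\setminus V_0)$ is $L^2$-dense by standard measure-theoretic truncation (again using $\mu_\eta(V_0)=0$), gives that $\mathcal{D}_\mathcal{R}^0$ is densely defined. I do not foresee a serious obstacle here; the only point requiring care is to justify existence and the quadratic energy formula for the harmonic extension with respect to the non-self-similar form $\mathcal{E}_\mathcal{R}$, but this follows from the abstract theory of resistance forms on compact spaces applied to the regular form established in Lemma~\ref{lem21} (equivalently, from a $\Delta$-$Y$-type finite-network reduction to $V_0$).
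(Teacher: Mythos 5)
Your argument is correct, but it is worth noting that the paper does not actually prove this statement inline: its ``proof'' is a citation to \cite[Lemma 5.2]{haus17}, and for the structurally identical statement in Lemma~\ref{lem6parts} the author simply invokes the general restriction theorems \cite[Theorem 10.3]{kig12} and \cite[Theorem 4.4.3]{fot}, which say that killing a regular resistance/Dirichlet form on a closed (here finite) set again yields a regular Dirichlet form on the complement. So the paper's route is to quote abstract theory, while yours is a hands-on verification of the four axioms; your route costs a page of routine checking but makes transparent exactly where each hypothesis ($\mu_\eta(V_0)=0$, compactness of $(K,R_\mathcal{R})$, continuity of elements of $\mathcal{D}_\mathcal{R}$) enters, and the harmonic-subtraction trick you use for regularity is exactly the standard mechanism hidden inside the cited theorems. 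Two small points to tighten: first, the resistance estimate $|u(x)-u(y)|^2\le R_\mathcal{R}(x,y)\,\mathcal{E}_\mathcal{R}(u,u)$ only controls \emph{differences}, so to see that $u\mapsto u(p_i)$ is bounded on $(\mathcal{D}_\mathcal{R},\mathcal{E}_{\mathcal{R},1}^{1/2})$ you must also use the $L^2$ part of the norm (e.g.\ $|u(p_i)|\le \|u\|_{\mu_\eta}\mu_\eta(K)^{-1/2}+\sup_y R_\mathcal{R}(p_i,y)^{1/2}\,\mathcal{E}_\mathcal{R}(u,u)^{1/2}$); second, since $\mathcal{F}_\mathcal{R}\subset C(K)$ by definition and $K$ is compact, every element of $\mathcal{D}_\mathcal{R}^0$ already lies in $C_0(K\setminus V_0)$, so your $\mathcal{E}_1$-approximation step for the ``core'' half of regularity is vacuous and can be omitted. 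Neither issue affects the validity of the proof.
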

\begin{proof}
\cite[Lemma 5.2]{haus17}
\end{proof}
We denote the associated self-adjoint operators with dense domains by $-\Delta_N^{\mu_\eta,\mathcal{R}}$ resp. $-\Delta_D^{\mu_\eta,\mathcal{R}}$. \\
\begin{lemma} $-\Delta_N^{\mu_\eta,\mathcal{R}}$ and $-\Delta_D^{\mu_\eta,\mathcal{R}}$ have discrete non negative spectrum.\label{lem23}\end{lemma}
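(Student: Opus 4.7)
The plan is to reduce both assertions to the standard spectral-theoretic criterion: a non-negative self-adjoint operator associated to a closed symmetric form has discrete non-negative spectrum if and only if the embedding of its form domain (equipped with the graph norm) into the underlying $L^2$-space is compact.

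Non-negativity is the easy half. The quadratic forms $\E_\mathcal{R}^\Sigma$ and $\E_\mathcal{R}^I$ are manifestly non-negative (each is a limit or sum of squares), hence so is $\E_\mathcal{R}$. By the standard correspondence between closed non-negative symmetric forms and non-negative self-adjoint operators, $-\Delta_N^{\mu_\eta,\mathcal{R}}$ and $-\Delta_D^{\mu_\eta,\mathcal{R}}$ are non-negative on their respective $L^2$-spaces.

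For discreteness, the first step is to equip $K$ with the resistance metric $R_\mathcal{R}$ associated with $\E_\mathcal{R}$. By \cite[Theorem 7.16]{afk17} quoted above, $(K,R_\mathcal{R})$ is compact. The defining property of a resistance form yields the pointwise bound
\begin{align*}
|u(x)-u(y)|^2 \leq R_\mathcal{R}(x,y)\,\E_\mathcal{R}(u,u)\qquad \text{for all } u\in\F_\mathcal{R},\ x,y\in K.
\end{align*}
Hence any $\E_{\mathcal{R},1}$-bounded subset of $\F_\mathcal{R}$ is uniformly bounded and equicontinuous in $R_\mathcal{R}$, so by Arzel\`a-Ascoli it is precompact in $C(K)$. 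Since $\mu_\eta$ is a finite Borel measure, the embedding $C(K)\hookrightarrow L^2(K,\mu_\eta)$ is continuous, and I conclude that $(\D_\mathcal{R},\E_{\mathcal{R},1}^{1/2})\hookrightarrow L^2(K,\mu_\eta)$ is compact. This compact embedding translates into the compactness of the resolvent of $-\Delta_N^{\mu_\eta,\mathcal{R}}$, whence its spectrum is a discrete set of eigenvalues of finite multiplicity accumulating only at $+\infty$.

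For $-\Delta_D^{\mu_\eta,\mathcal{R}}$ the same argument applies: $\D_\mathcal{R}^0$ is a closed subspace of $\D_\mathcal{R}$ in the $\E_{\mathcal{R},1}^{1/2}$-norm, so its embedding into $L^2(K,\mu_\eta|_{K\setminus V_0})$ is compact as a restriction of a compact map. The step I expect to take the most care is verifying the resistance-metric estimate and the continuity of the embedding in a way that is uniform in the parameter $\eta\in(0,1]$, in particular because the pure fractal measure $\mu_f$ does not have full support; however, this is absorbed into the fact that $\mu_\eta$ is a finite Borel measure on the compact space $(K,R_\mathcal{R})$, which suffices for the final passage from $C(K)$ to $L^2(K,\mu_\eta)$.
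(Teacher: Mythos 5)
Your proposal is correct and is essentially the argument the paper delegates to the citation \cite[Lemma 5.3]{haus17}: non-negativity from positivity of the form, and discreteness via the resistance-form estimate $|u(x)-u(y)|^2\leq R_{\mathcal{R}}(x,y)\,\E_{\mathcal{R}}(u,u)$ together with compactness of $(K,R_{\mathcal{R}})$ from \cite[Theorem 7.16]{afk17}, yielding a compact embedding of the form domain into $L^2(K,\mu_\eta)$ and hence a compact resolvent. These are exactly the ingredients the paper itself invokes elsewhere (e.g.\ the bound $|u(p)-u(q)|^2\leq 16\,\E_{\mathcal{R}}(u,u)$ in chapter~6), so no further comparison is needed.
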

\begin{proof}
\cite[Lemma 5.3]{haus17}
\end{proof}

Due to Lemma~\ref{lem23} we can write the eigenvalues in nondecreasing order and study the eigenvalue counting functions. Denote by $\lambda_k^{N,\mu_\eta,\mathcal{R}}$ the $k$-th eigenvalue of $-\Delta_N^{\mu_\eta,\mathcal{R}}$ resp. $\lambda_k^{D,\mu_\eta,\mathcal{R}}$ for $-\Delta_D^{\mu_\eta,\mathcal{R}}$ with $k\geq 1$. Now define
\begin{align*}
N_N^{\mu_\eta,\mathcal{R}}(x):=\#\{k\geq 1: \lambda_k^{N,\mu_\eta,\mathcal{R}}\leq x\}\\
N_D^{\mu_\eta,\mathcal{R}}(x):=\#\{k\geq 1: \lambda_k^{D,\mu_\eta,\mathcal{R}}\leq x\}
\end{align*}
Because $\mathcal{D}_\mathcal{R}^0\subset \mathcal{D}_\mathcal{R}$ and $\dim \D_\R/ { \D_{\R}^0}=3$ (since this quotient space are the harmonic functions) we immediately get 
\begin{align*}
N_D^{\mu_\eta,\mathcal{R}}(x)\leq N_N^{\mu_\eta,\mathcal{R}}(x)\leq N_D^{\mu_\eta,\R}(x)+3, \ \forall x\geq 0
\end{align*}
\subsection{Conditions and asymptotic growing}\label{cond}
To be able to calculate the asymptotic growing of the eigenvalue counting functions we need to set some conditions for the sequences of matching pairs. From now on we only consider such sequences $\R=(r_i,\rho_i)_{i\geq1}$ such that there is a $r\in [\frac 13,\frac 35]$ with
\begin{align}
\sum_{i=1}^\infty |r-r_i|<\infty \label{cond1}
\end{align}
If this is satified it is easy to show with the limit comparison test that the series $\sum_{i=1}^\infty |\ln(r^{-1}r_i)|$ converges and thus
\begin{align*}
\prod_{i=1}^\infty r^{-1}r_i\in(0,\infty)
\end{align*}
This means the sequence $a_m:=\prod_{i=1}^m r^{-1}r_i$ is bounded from above and below:
\begin{align*}
\tilde\kappa_1 r^m \leq \delta_m \leq \tilde\kappa_2 r^m, \ \forall m
\end{align*}
For $\delta_m^{(n)}:=r^{n+1}\cdots r^{n+m}=\frac{\delta_{n+m}}{\delta_n}$ this means
\begin{align*} 
\frac{\tilde \kappa_1}{\tilde \kappa_2} r^m \leq \delta^{(n)}_m \leq \frac{\tilde \kappa_2}{\tilde \kappa_1} r^m
\end{align*}
Without loss of generality we can assume that $\tilde \kappa_1\leq 1 \leq \tilde \kappa_2$ and thus with $\kappa_1:=\frac{\tilde \kappa_1}{\tilde \kappa_2} $ and $\kappa_2:=\frac{\tilde \kappa_2}{\tilde \kappa_1} $ we get for all $n$ and $m$: 
\begin{align}
\kappa_1r^m\leq \delta^{(n)}_m\leq \kappa_2r^m \label{cond2}
\end{align}
It is easy to see that (\ref{cond1}) and (\ref{cond2}) are equivalent. However we want to keep (\ref{cond1}) since in the case of $r=\frac 35$ this has a very nice meaning. This is the only case where the fractal part of the quadratic form really exists (see \cite{afk17}).

\begin{theorem}Let $\R$ be a sequence of matching pairs that fulfils the conditions and $\mu=\mu_\eta$ with $\eta\in(0,1]$. Then there exist constants $0<C_1,C_2<\infty$ and $x_0\geq 0$ such that for all $x\geq x_0$:
\begin{align*}
C_1x^{\frac 12 d_S^{\mu,\R}}\leq N_D^{\mu,\R}(x)\leq N_N^{\mu,\R}(x)\leq C_2x^{\frac 12 d_S^{\mu,\R}}
\end{align*}
with 
\begin{align*}
d_S^{\mu,\R}=\begin{cases}
\frac{\ln 9}{\ln 3- \ln r},  \ \text{for } \mu=\mu_\eta \text{ with } \eta \in (0,1)\\[.2cm]
\frac{\ln 9}{- \ln( \beta r)},  \ \text{for } \mu=\mu_1=\mu_l, \ \text{with } \beta>\frac 1{9r}
\end{cases}
\end{align*}\label{theo24}
\end{theorem}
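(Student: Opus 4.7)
The plan is to establish both bounds by Dirichlet-Neumann bracketing across the hierarchical cell decomposition of $K$, combined with a uniform scaling argument that compensates for the fact that the SSG is only \emph{asymptotically} self-similar in the sense of condition (\ref{cond1}). This follows the general strategy of Kigami-Lapidus \cite{kl93}, modified to handle the shifted matching sequences that arise when restricting to small cells.

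First I would decompose $K=\bigcup_{|w|=m}K_w\cup J_m$ and impose Neumann (respectively Dirichlet) boundary conditions along the cutting set $V_m$. A standard min-max argument then brackets $N_D^{\mu_\eta,\mathcal{R}}(x)$ from below and $N_N^{\mu_\eta,\mathcal{R}}(x)$ from above by a sum of counting functions on the cells $K_w$ with $|w|=m$ together with contributions from the edges $e_i^{w'}$, $|w'|<m$. On each cell $K_w$, pulling back along $G_w$ rescales the $\Sigma$-part of $\mathcal{E}_{\mathcal{R}}$ by $\delta_m^{-1}$ and replaces $\mathcal{R}$ by the shifted sequence $\mathcal{R}^{(m)}=(r_{m+i},\rho_{m+i})_{i\geq 1}$, while $\mu_\eta|_{K_w}$ has total mass of order $3^{-m}$ for $\eta\in(0,1)$ and of order $\beta^m$ for $\eta=1$. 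Consequently the cell eigenvalues are of order $\theta^{-m}$ times those of a shifted SSG, with $\theta=r/3$ in the first case and $\theta=r\beta$ in the second, and condition (\ref{cond2}) guarantees that all the shifted forms $\mathcal{E}_{\mathcal{R}^{(m)}}$ are uniformly quasi-isometric to $\mathcal{E}_{\mathcal{R}}$.

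Summing the $3^m$ cell contributions and choosing $m$ so that $\theta^{m} x\asymp 1$ produces the main term $N^{\mu_\eta,\mathcal{R}}(x)\asymp 3^m\asymp x^{d_S/2}$ with
\[ d_S=\frac{\ln 9}{-\ln\theta}, \]
which matches both formulas in the theorem. The edge contributions are handled by an elementary one-dimensional Weyl estimate on each interval $e_i^w$, whose energy density is $\gamma_{|w|+1}^{-1}$ and whose $\mu_\eta$-mass is of order $\beta^{|w|}$ or $3^{-|w|}$. Summing the resulting geometric series in $|w|$, the contribution is $O(x^{d_S/2})$ and is absorbed into the constants $C_1,C_2$; the lower bound $\beta>\frac{1}{9r}$ in the case $\eta=1$ is precisely what makes this absorption possible.

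The main technical obstacle is the absence of exact self-similarity: the cell-restricted problem is governed by $\mathcal{R}^{(m)}\neq\mathcal{R}$, so no clean renewal equation is available and all shifted problems must be controlled simultaneously. The role of (\ref{cond1}) is precisely to make this control uniform in $m$, so that the upper and lower brackets close with the same exponent $d_S$. Calibrating the edge contribution against the fractal contribution, especially near the borderline $\eta=1$, is the delicate book-keeping step that one has to carry out carefully.
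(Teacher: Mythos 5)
The paper does not actually prove Theorem~\ref{theo24} here: it defers entirely to \cite[Theorem 6.1]{haus17}. Your Dirichlet--Neumann bracketing over the cells $K_w$ and the junction set, with each cell problem pulled back to the shifted sequence $\mathcal{R}^{(m)}$ and the renormalized measure $\mu_\eta^{(m)}$, is exactly the strategy of that reference and is mirrored in Section~\ref{chapper} of the present paper; your identification of the scaling factors $\theta=r/3$ (for $\eta\in(0,1)$) and $\theta=r\beta$ (for $\eta=1$), and your accounting of the edge contribution including the role of the hypothesis $\beta>\frac{1}{9r}$, are all correct.

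One caveat: your assertion that condition (\ref{cond2}) alone makes all shifted forms $\mathcal{E}_{\mathcal{R}^{(m)}}$ uniformly quasi-isometric to $\mathcal{E}_{\mathcal{R}}$ is not justified at the endpoint $r=\tfrac35$, which the theorem allows and which is the most interesting case (the only one where the fractal part of the energy survives). The $\Sigma$-parts do compare uniformly via $\delta_k^{(m)}\asymp \delta_k\, r^m/\delta_m$, but the line coefficients satisfy $\gamma_k^{(m)}=\gamma_k\cdot\frac{r_k\cdots r_{m+k-1}}{\delta_m}\cdot\frac{\rho_{m+k}}{\rho_k}$, and when $r=\tfrac35$ the matching-pair relation forces $\rho_i\to 0$, so the ratios $\rho_{m+k}/\rho_k$ are not uniformly bounded in $m$ and $k$; indeed the paper only states this quasi-isometry under the additional hypothesis $r<\tfrac35$ in Section~\ref{chapper}. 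For the two-sided counting-function bounds this is repairable --- one should estimate the shifted cell problems $(\mathcal{E}_{\mathcal{R}^{(m)}},\mu_\eta^{(m)})$ uniformly in their own right (as \cite{haus17} does) rather than by comparison back to $\mathcal{E}_{\mathcal{R}}$ --- but as written this step of your argument does not cover $r=\tfrac35$.
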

\begin{proof}
The proof for Theorem~\ref{theo24} can be found in \cite[Theorem 6.1]{haus17} for the case of $\eta=\frac 12$ and slightly stronger conditions. The generalization to $\mu_\eta$ with $\eta\in (0,1]$ is straightforward and the generalization to the weaker conditions in this work will appear in a subsequent publication \cite{hausgen}.
\end{proof}
For sake of notation, we omit the dependencies $\mu_\eta$ and $\R$ in the notation for $d_S$. We can also express Theorem~\ref{theo24} in the following form:
\begin{align*}
0<\liminf_{x\rightarrow \infty} N_\ast^{\mu_\eta,\R}(x)x^{-\frac 12d_S}\leq \limsup_{x\rightarrow \infty} N_\ast^{\mu_\eta,\R}(x)x^{-\frac 12d_S}<\infty, \quad \ast= N,D
\end{align*}

\section{Review on oscillations on SG}\label{chapsirp}
Before we start to look for oscillations in the leading term for the Stretched Sierpinski Gasket we quickly want to review how we can tackle this problem for the usual Sierpinski Gasket. \\

The Sierpinski Gasket $S$ (see Figure~\ref{sierpinski}) is the attractor of the IFS $(F_1,F_2,F_3)$ consisting of three similitudes with contraction ratios $\frac 12$. 
\begin{figure}[H]
\centering
\includegraphics[width=0.6\textwidth]{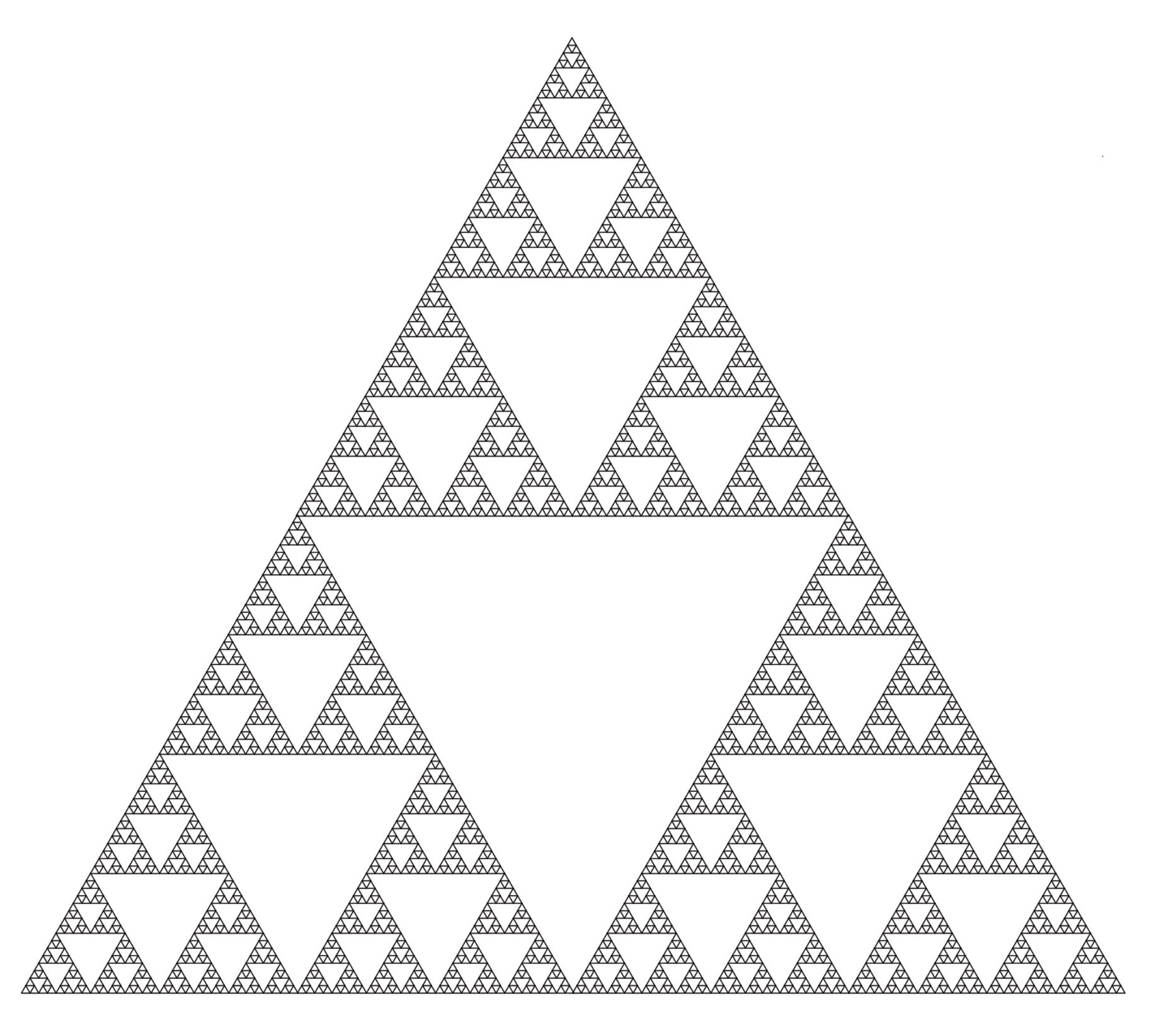}
\caption{The Sierpinski Gasket.}\label{sierpinski}
\end{figure}
On the Sierpinski Gasket we have the following useful rescaling property of the symmetric energy:
\begin{align*}
\E(u,v)=\sum_{i=1}^3 \left(\frac 35\right)^{-1} \E(u\circ F_i,v\circ F_i)
\end{align*}
If we use the normalized Hausdorff measure to get a Dirichlet form, it is well known that the eigenvalue counting functions (either Dirichlet or Neumann) have the asymptotic growing with spectral exponent $d_S=\frac{\ln 9}{\ln 5}$ (see \cite{fushim},\cite{kl93}). This means
\begin{align*}
0<\liminf_{x\rightarrow \infty} N_D(x)x^{-\frac 12d_S}\leq \limsup_{x\rightarrow \infty} N_D(x)x^{-\frac 12d_S} <\infty
\end{align*}
We are in particular interested in the $\leq$ sign. Do the two limits coincide or do they differ? To answer this we introduce Dirichlet and Neumann boundary conditions at $V_1\backslash V_0$ to get the following inequalities (see \cite{kl93}):
\begin{align*}
\sum_{i=1}^3 N_D(\tfrac 15 x)\leq N_D(x)\leq N_N(x)\leq \sum_{i=1}^3N_N(\tfrac 15 x)\leq \sum_{i=1}^3N_D(\tfrac 15 x)+9
\end{align*}
Now we have the Dirichlet eigenvalue counting function wedged in between some scaled version of itself. The scaling $\frac 15$ consists of two factors. One is the scaling of the energy $\frac 35$ and the other is the scaling of the measure $\frac 13$.\\

We can use these inequalities to apply renewal theory (see \cite{kl93} and \cite{kig98}) and get a positive periodic function $G$ with period $\frac 12\ln 5$ such that
\begin{align*}
N_D(x)=G\left(\frac{\ln x}2\right)x^{\frac 12 d_S}+\mathcal{O}(1)
\end{align*}
The boundedness of the error term is strongly connected to the boundedness of the error term in the inequalities.\\

However this doesn't answer the question of convergence since the periodic function $G$ could still be constant.  To get an answer we look at localized eigenfunctions. These are eigenfunctions that are supported on a proper subset of the Sierpinski Gasket. The existence of such eigenfunctions is strongly connected to the existence of so called Dirichlet-Neumann eigenfunctions. These are eigenfunctions that are simultaneously eigenfunctions to Dirichlet and Neumann boundary conditions. They are also called pre-localized eigenfunctions. As it turns out, such eigenfunctions exist on the Sierpinski Gasket (see \cite{bk97}). Let $u$ be such a pre-localized eigenfunction with eigenvalue $\lambda$. We then define for $w\in\A^n$
\begin{align*}
u_w(x):=\begin{cases} u\circ F_w^{-1}(x) &,  x\in S_w\\ 0&, \text{otherwise} \end{cases} 
\end{align*}
where $S_w=F_w(S)$ is a $n$-cell of the Sierpinski Gasket. Using the rescaling properties of the energy and the measure we can show that $u_w$ itself is again an eigenfunction (Dirichlet and Neumann) with eigenvalue $\lambda 5^n$ (see \cite{bk97}). We can do this construction for all $n$-cells and there are $3^n$ many. This means the eigenvalue $\lambda 5^n$ has multiplicity at least $3^n$. Therefore we have a sequence of growing eigenvalues with very high multiplicites. High multiplicities lead to very big jumps in the eigenvalue counting function. This sequence of eigenvalues is enough to show, that $\liminf$ and $\limsup$ can't coincide (see \cite{bk97} or compare to chapter~\ref{chaple}).

\section{How does the measure scale?}\label{chapmeas}
How do our measures scale for smaller getting cells? For the fractal part this is clear due to its definition:
\begin{align*}
\mu_f(K_w)=\frac 1{3^{|w|}}
\end{align*}
The measure $\mu_l$ on the line part exhibits another scaling. Since
\begin{align*}
1&=3^{|w|}\mu_l(K_w)+\sum_{\tilde w: |\tilde w|< |w|,i} \mu_l(e_i^{\tilde w})\\
&=3^{|w|}\mu_l(K_w)+3a\sum_{k=0}^{|w|-1}(3\beta)^k\\
&=3^{|w|}\mu_l(K_w)+(1-(3\beta)^{|w|})\\[0.2cm]
\Rightarrow \mu_l(K_w)&=\beta^{|w|}
\end{align*}
For $\mu_\eta$ with $\eta\in(0,1)$ we get the following estimates which will be useful later on.
\begin{align*}
(1-\eta)\left(\frac 13\right)^{|w|}\leq \mu_\eta(K_w)\leq \left(\frac 13\right)^{|w|}
\end{align*}

Let $\mu=\mu_\eta$ for any $\eta\in(0,1]$. In the proof of Theorem~\ref{theo24} in \cite{haus17} we used a set of measures $\mu^w_\eta$. These measures were defined as follows. For $w\in \A^n$ define
\begin{align*}
\mu_\eta^w:=\mu_\eta(K_w)^{-1}\mu_\eta\circ G_w
\end{align*}
$\mu^w_\eta$ is a measure on all of $K$ but only reflects the properties of $\mu_\eta$ in $K_w$. For self-similar measures $\nu$ we have $\nu^w=\nu$ but we are not in that case. The two parts $\mu_l$ and $\mu_f$ both fulfil the equality on their own but not the sum of them.
\begin{align*}
\mu_l^w=\mu_l \text{ and } \mu_f^w=\mu_f \text{ but }\mu_\eta^w\neq \mu_\eta \text{ for } \eta\in(0,1)
\end{align*}
This is the case since the scaling constants $\mu_l(K_w)$ and $\mu_f(K_w)$ are not equal, i.e. $\beta\neq \frac 13$. So equality is asked too much, but they still have a lot in common. 
\begin{align*}
\mu_\eta^w&=\mu_\eta(K_w)^{-1}\mu_\eta\circ G_w\\
&=\mu_\eta(K_w)^{-1}\left((\eta\mu_l+(1-\eta)\mu_f)\circ G_w\right)\\
&=\frac{\eta}{\mu_\eta(K_w)}\mu_l\circ G_w+ \frac{1-\eta}{\mu_\eta(K_w)}\mu_f\circ G_w\\
&=\frac{\eta\mu_l(K_w)}{\mu_\eta(K_w)}\mu_l^w+ \frac{(1-\eta)\mu_f(K_w)}{\mu_\eta(K_w)}\mu_f^w\\
&=\underbrace{\frac{\eta\mu_l(K_w)}{\mu_\eta(K_w)}}_{=:\eta^w}\mu_l+ \frac{(1-\eta)\mu_f(K_w)}{\mu_\eta(K_w)}\mu_f\\
&=\eta^w\mu_l+(1-\eta^w)\mu_f
\end{align*}
This means we get the following result:
\begin{lemma}\label{lemmeas}
For $\eta\in(0,1)$ with $\eta^w:=\frac{\eta\mu_l(K_w)}{\mu_\eta(K_w)}$ we get
\begin{align*}
\mu_\eta^w&=\mu_\eta(K_w)^{-1} \mu_\eta\circ G_w\\
&=\mu_{\eta^w}\\
&=\mu_{\eta^{|w|}}
\end{align*}
\end{lemma}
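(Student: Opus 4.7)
The plan is to handle the three equalities in turn. The first is just the definition of $\mu_\eta^w$, so nothing is to be proved. The second equality is exactly the chain of identities displayed immediately before the statement, and its only nontrivial inputs are the linearity $\mu_\eta = \eta\mu_l + (1-\eta)\mu_f$ and the self-similarity identities $\mu_l^w = \mu_l$ and $\mu_f^w = \mu_f$, which the excerpt asserts but does not explicitly verify. The third equality amounts to the observation that $\eta^w$ depends on $w$ only through $|w|$.

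To justify $\mu_f^w = \mu_f$ I would argue straight from the definition of $\mu_f$ by mass distribution: since $\mu_f(K_{wv}) = 3^{-(|w|+|v|)} = \mu_f(K_w)\,\mu_f(K_v)$ for all finite words, normalising the pushforward $\mu_f\circ G_w$ by $\mu_f(K_w) = 3^{-|w|}$ reproduces the value $\mu_f(K_v)$ on every cell $K_v$, and $\mu_f$ is characterized by its values on the cells. For $\mu_l^w = \mu_l$ I would argue edge by edge: $G_w$ maps $e_j^{wv}$ affinely onto $e_j^v$, and by construction $\mu_l|_{e_j^{wv}} = a\beta^{|w|+|v|}\lambda^1/\lambda^1(e_j^{wv})$. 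The pushforward of the normalised Lebesgue measure on $e_j^{wv}$ under $G_w^{-1}$ is the normalised Lebesgue measure on $e_j^v$ (the normalisation absorbs the Jacobian), and the prefactor $\beta^{|w|}$ cancels against the normalisation $\mu_l(K_w) = \beta^{|w|}$, leaving $a\beta^{|v|}\lambda^1/\lambda^1(e_j^v) = \mu_l|_{e_j^v}$. Since both measures are concentrated on the closure of $\bigcup_{v,j} e_j^v$, this suffices.

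For the third equality I would substitute the scaling formulas $\mu_l(K_w)=\beta^{|w|}$ and $\mu_\eta(K_w) = \eta\beta^{|w|} + (1-\eta)3^{-|w|}$ (both derived at the start of this section) into the definition of $\eta^w$, which gives
\begin{align*}
\eta^w \;=\; \frac{\eta\beta^{|w|}}{\eta\beta^{|w|} + (1-\eta)3^{-|w|}}.
\end{align*}
This depends on $w$ only through $|w|$, so one may consistently denote the common value by $\eta^{|w|}$ and conclude $\mu_{\eta^w} = \mu_{\eta^{|w|}}$.

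I expect the main, though still mild, obstacle to be the careful verification of $\mu_l^w = \mu_l$: the line measure is built piecewise on countably many edges of every scale, so one must check that the pullback by $G_w$ recovers the entire measure rather than only its restriction to edges at level $\geq |w|$, and that the normalisation of the Lebesgue factor on each edge behaves as claimed. After that, the remaining content of the lemma is essentially arithmetic.
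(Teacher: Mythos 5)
Your proposal is correct and follows the same route as the paper: the paper's proof is precisely the displayed computation preceding the lemma, which decomposes $\mu_\eta\circ G_w$ linearly into the two parts and uses $\mu_l^w=\mu_l$, $\mu_f^w=\mu_f$, with the last equality following because $\mu_l(K_w)$ and $\mu_\eta(K_w)$ depend only on $|w|$. The only difference is that you actually verify the self-similarity of $\mu_f$ and $\mu_l$ (correctly, including the normalisation of the Lebesgue factor on each edge $e_j^{wv}$), whereas the paper simply asserts it.
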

Thus $\mu_\eta^w$ is again a convex combination of the two parts of the measure but with different parameter $\eta^w$. The last equality indicates, that the measures $\mu_\eta^w$ are all the same for words of same length. This is due to the fact, that $\mu_\eta$ is very symmetric and behaves the same on all $n$-cells. So we could also write 
\begin{align*}
\mu_\eta^{(n)}:=\mu_\eta^w, \ \text{for any } w\in \A^n
\end{align*}
We also get estimates for the $L^2$ norms. We have 
\begin{align*}
||f||_{\mu_\eta^w}^2&=\int_K f^2 d\mu_\eta^w\\
&=\eta^w \int_K f^2 d\mu_l + (1-\eta^w)\int_K f^2 d\mu_f\\[.1cm]
||f||_{\mu_\eta}^2&=\eta \int_K f^2 d\mu_l+(1-\eta) \int_K f^2 d\mu_f
\end{align*}
Now since $\mu_l(K_w)\leq \mu_f(K_w)$ we get
\begin{align*}
\frac{\mu_l(K_w)}{\mu_\eta(K_w)}\cdot ||f||_{\mu_\eta}^2\leq ||f||_{\mu_\eta^w}^2\leq \frac{\mu_f(K_w)}{\mu_\eta(K_w)}\cdot ||f||_{\mu_\eta}^2
\end{align*}
for all $f\in L^2(K,\mu_\eta)$. It is easy to see that those estimates are sharp.
\\[.2cm]

\section{How does the energy scale and why isn't renewal theory applicable?}\label{chapper}
In this chapter we want to look at the energy and see how it rescales. We want to get similar estimates for the eigenvalue counting functions as for the Sierpinski Gasket in chapter~\ref{chapsirp}. The eigenvalue counting function has many dependencies which we all want to include in the notation whenever it is necessary. This means for a Dirichlet form $\E$ with domain $\D$ in the Hilbert space $L^2(K,\mu)$ we denote the eigenvalue counting function of the associated self-adjoint operator at point $x$ with
\begin{align*}
N(\E,\D,\mu,x)
\end{align*} 

From \cite{afk17} we know, that the energy has the following rescaling property. With $\R^{(n)}=(r_{n+k},\rho_{n+k})_{k\geq 1}$ we have for $u,v\in \D_\R$
\begin{align*}
\E_\R(u,v)=\sum_{w\in\A^n}\frac 1{\delta_n} \E_{\R^{(n)}}(u\circ G_w,v\circ G_w) + \sum_{k=1}^n \frac 1{\gamma_k} \D_k^I(u,v)
\end{align*}
This rescaling property is similar to the one on the self-similar Sierpinski Gasket but not quite as nice. We see that there is an additional term on the right hand site. This comes from the one-dimensional lines that connect the $n$-cells. As it turns out we will be able to work with this as it is somehow of lower order. The other difference is that the quadratic forms $\E_\R$ on the left and $\E_{\R^{(n)}}$ on the right hand side differ. As the sequences of matching pairs are different ones, the quadratic forms are different. This also doesn't appear in the self-similar case. Nonetheless we try to get the same results concerning periodicity. We introduce Neumann boundary conditions at $V_n\backslash V_0$ as in the proof of Theorem~\ref{theo24} (see \cite{haus17}).
 With 
\begin{align*}
\mathcal{D}_{\R,K_w}:=\{u\in L^2(K,\mu_\eta): u|_{K_w^c}\equiv 0, \exists f\in \mathcal{D}_\R: f|_{K_w}=u\}
\end{align*}
we have\\[.2cm]
\begin{lemma}
\begin{align*}
N(\mathcal{E}_{\mathcal{R}},\mathcal{D}_{\R,K_w},\mu_\eta,x)=N(\mathcal{E}_{\mathcal{R}^{(n)}},\mathcal{D}_{\mathcal{R}^{(n)}},\mu_\eta^w,\mu_\eta(K_w)\delta_n x)
\end{align*}
\end{lemma}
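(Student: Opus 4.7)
The strategy is to conjugate the eigenvalue problem on $K_w$ by the contraction $G_w$. Define the linear map $T\colon \mathcal{D}_{\mathcal{R},K_w} \to \mathcal{D}_{\mathcal{R}^{(n)}}$ by $Tu := u \circ G_w$. Since $G_w\colon K \to K_w$ is a homeomorphism this is well-defined. Moreover any $u\in\mathcal{D}_{\mathcal{R},K_w}$ is continuous on $K$ and vanishes on the open set $K_w^c$, hence it must vanish on the boundary $\partial K_w = G_w(V_0)$; consequently $Tu$ vanishes on $V_0$. Conversely any $v\in\mathcal{D}_{\mathcal{R}^{(n)}}^0$ pulls back via $G_w^{-1}$ and extends by zero to produce a preimage, so $T$ is a bijection onto $\mathcal{D}_{\mathcal{R}^{(n)}}^0$. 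The first point to note is therefore that the right-hand spectral problem is implicitly the Dirichlet one at $V_0$.

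Next I would establish two rescaling identities. For the energy one starts from the decomposition
\begin{align*}
\mathcal{E}_\mathcal{R}(u,u) = \sum_{w'\in\mathcal{A}^n} \frac{1}{\delta_n}\mathcal{E}_{\mathcal{R}^{(n)}}(u\circ G_{w'},u\circ G_{w'}) + \sum_{k=1}^n \frac{1}{\gamma_k}\mathcal{D}_k^I(u,u)
\end{align*}
recalled in the text. For $u\in\mathcal{D}_{\mathcal{R},K_w}$ every edge $e_i^{w'}$ with $|w'|<n$ lies in $K_w^c$, so $\mathcal{D}_k^I(u,u)=0$ for all $k\leq n$. For $w'\neq w$ with $|w'|=n$ the intersection $K_w\cap K_{w'}$ is at most a single vertex of $V_n$, at which $u$ already vanishes by the boundary argument above; hence $u\circ G_{w'}\equiv 0$ and only the term $w'=w$ survives. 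This yields $\mathcal{E}_\mathcal{R}(u,u)=\delta_n^{-1}\mathcal{E}_{\mathcal{R}^{(n)}}(Tu,Tu)$. For the $L^2$-norm a direct change of variables together with Lemma~\ref{lemmeas} and the defining relation $\mu_\eta^w=\mu_\eta(K_w)^{-1}\mu_\eta\circ G_w$ gives $\|u\|_{\mu_\eta}^2 = \mu_\eta(K_w)\,\|Tu\|_{\mu_\eta^w}^2$.

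Combining these two identities, the Rayleigh quotient transforms as
\begin{align*}
\frac{\mathcal{E}_\mathcal{R}(u,u)}{\|u\|_{\mu_\eta}^2} = \frac{1}{\delta_n\,\mu_\eta(K_w)}\cdot\frac{\mathcal{E}_{\mathcal{R}^{(n)}}(Tu,Tu)}{\|Tu\|_{\mu_\eta^w}^2}.
\end{align*}
Applying the min-max (Courant--Fischer) characterization through the bijection $T$, the $k$-th eigenvalue on the left equals $(\delta_n\mu_\eta(K_w))^{-1}$ times the $k$-th eigenvalue on the right. A straightforward counting argument then yields exactly the claimed identity of counting functions.

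The main obstacle is the second step: confirming that all \emph{off-diagonal} contributions to the energy vanish. This splits into a geometric observation (neighbouring $n$-cells meet at most in a vertex of $V_n$ and the line edges introduced at levels $1,\ldots,n$ all lie outside $K_w$), together with the continuity argument at $\partial K_w = G_w(V_0)$ which forces the boundary vertex values to be $0$. The measure rescaling and the final min-max passage are then routine.
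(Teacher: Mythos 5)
There is a genuine gap at the very first step, and it changes which statement you end up proving. You assert that every $u\in\D_{\R,K_w}$ is continuous on $K$, hence vanishes on $\partial K_w=G_w(V_0)$, and conclude that $T\colon u\mapsto u\circ G_w$ is a bijection onto the Dirichlet domain $\D_{\R^{(n)}}^0$. But the definition
\begin{align*}
\D_{\R,K_w}=\{u\in L^2(K,\mu_\eta): u|_{K_w^c}\equiv 0,\ \exists f\in \D_\R: f|_{K_w}=u\}
\end{align*}
imposes no continuity on $u$ across $\partial K_w$: an element of $\D_{\R,K_w}$ is a global $f\in\D_\R$ restricted to $K_w$ and glued to $0$ outside, and $f$ need not vanish at $G_w(V_0)$. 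This is exactly the point of the construction -- $\D_{\R,K_w}$ encodes \emph{Neumann} conditions at the cell boundary (it is used afterwards to bound $N_N^{\mu_\eta,\R}$ from above by decoupling the cells, which only works with the enlarged, non-Dirichlet domains), and the lemma accordingly has $\D_{\R^{(n)}}$ on the right-hand side, not $\D_{\R^{(n)}}^0$. With your identification you would instead prove $N(\E_\R,\D_{\R,K_w},\mu_\eta,x)=N(\E_{\R^{(n)}},\D^0_{\R^{(n)}},\mu_\eta^w,\mu_\eta(K_w)\delta_n x)$, which is a different and in general false identity (the two right-hand sides can differ by up to $3$); the Dirichlet version is the content of the \emph{next} lemma, whose left-hand side carries $\D_{\R,K_w}^0$.

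The repair is immediate and brings you back to the paper's argument: $u\mapsto u\circ G_w$ is a bijection of $\D_{\R,K_w}$ onto \emph{all} of $\D_{\R^{(n)}}$ (surjectivity because any $\tilde v\in\D_{\R^{(n)}}$ arises from $\tilde v\circ G_w^{-1}$ on $K_w$ extended by zero). Your two rescaling identities, $\E_\R(u,v)=\delta_n^{-1}\E_{\R^{(n)}}(u\circ G_w,v\circ G_w)$ and $\|u\|_{\mu_\eta}^2=\mu_\eta(K_w)\,\|u\circ G_w\|^2_{\mu_\eta^w}$, are correct and are precisely what the paper uses; the paper then transports the weak eigenvalue equation $\E_\R(u,v)=\lambda(u,v)_{\mu_\eta}$ through this bijection rather than invoking min--max, but that difference is immaterial. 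One further small point: the vanishing of the off-diagonal terms is even simpler than you make it, since on the SSG distinct $n$-cells are pairwise \emph{disjoint} (they are joined only by the line segments $e_i^{w'}$, whose interiors lie in $K_w^c$), so there is no shared vertex of $V_n$ and no continuity argument is needed there either.
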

\begin{proof} Let $u$ be an eigenfunction of $(\mathcal{E}_\mathcal{R},\mathcal{D}_{\R,K_w},\mu_\eta)$ with eigenvalue $\lambda$. That means for all $v\in \mathcal{D}_{\R,K_w}$ we have
\begin{align*}
\mathcal{E}_{\mathcal{R}}(u,v)=\lambda(u,v)_{\mu_\eta} 
\end{align*}
Since $\mathcal{E}_\mathcal{R}(u,v)=\frac 1{\delta_n}\mathcal{E}_{\mathcal{R}^{(n)}}(u\circ G_w,v\circ G_w)$ and
\begin{align*}
\lambda(u,v)_{\mu_\eta}&=\lambda\int_K uvd{\mu_\eta} \\
&=\lambda\int_{K_w}uvd{\mu_\eta} \\
&=\lambda {\mu_\eta}(K_w)\int_K u\circ G_w\cdot v\circ G_w d\mu_\eta^w
\end{align*}
so
\begin{align*}
\mathcal{E}_{\mathcal{R}^{(n)}}(u\circ G_w,v\circ G_w)=\lambda\delta_n\mu_\eta(K_w)(u\circ G_w,v\circ G_w)_{\mu_\eta^w}
\end{align*}
This holds for all $v\in \mathcal{D}_{\R,K_w}$, but due to the construction of $\D_{\R,K_w}$ we can reach every $\tilde v\in \D_\R$ with $v\circ G_w$. Therefore $u\circ G_w$ is an eigenfunction of $(\mathcal{E}_{\mathcal{R}^{(n)}},\mathcal{D}_{\mathcal{R}^{(n)}},\mu_\eta)$ with eigenvalue $\lambda\mu_\eta(K_w)\delta_n$. \\

The other direction works analogously with $\tilde u:=u\circ G_w^{-1}$ as an eigenfunction of $(\mathcal{E}_\R,\mathcal{D}_{\R,K_w},\mu_\eta)$ if $u$ is an eigenfunction of ($\mathcal{E}_{\mathcal{R}^{(n)}},\mathcal{D}_{\mathcal{R}^{(n)}},\mu_\eta^w)$. We therefore have the desired result.
\end{proof}

With an analogous proof we get the same result for the Dirichlet case:
\begin{lemma} 
\begin{align*}
N(\mathcal{E}_{\mathcal{R}},\mathcal{D}_{\R,K_w}^0,\mu_\eta,x)=N(\mathcal{E}_{\mathcal{R}^{(n)}},\mathcal{D}_{\mathcal{R}^{(n)}}^0,\mu_\eta^w,\mu_\eta(K_w)\delta_n x)
\end{align*}
\end{lemma}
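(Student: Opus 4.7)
The strategy is to mirror the Neumann proof above step by step, tracking only how the Dirichlet condition at $V_0$ is exchanged under the map $\Phi\colon u\mapsto u\circ G_w$. In line with the notation $\D_\R^0=\{u\in\D_\R:u|_{V_0}\equiv 0\}$, the natural reading of $\D_{\R,K_w}^0$ is the subspace of $\D_{\R,K_w}$ consisting of functions that additionally vanish on the boundary $G_w(V_0)$ of the subcell $K_w$. Under this reading, $\Phi$ restricts to a linear bijection $\D_{\R,K_w}^0\to\D_{\R^{(n)}}^0$: $u$ vanishes on $G_w(V_0)$ exactly when $u\circ G_w$ vanishes on $V_0$, and the inverse $\tilde v\mapsto\tilde v\circ G_w^{-1}$ extended by zero off $K_w$ is well defined in $\D_\R$ precisely because the Dirichlet condition guarantees continuity across $\partial K_w$.

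Next, for any $u,v\in\D_{\R,K_w}^0$, the rescaling identity from \cite{afk17} collapses to
\begin{align*}
\E_\R(u,v)=\frac{1}{\delta_n}\E_{\R^{(n)}}(u\circ G_w,v\circ G_w).
\end{align*}
Indeed, the outer sum retains only the term $w'=w$ since $u\circ G_{w'}\equiv 0$ for $w'\ne w$, and every $\D_k^I(u,v)$ with $k\leq n$ vanishes: the interior of each edge $e_i^{w''}$ with $|w''|<n$ lies in $K_w^c$, so the $H^1$-representative of $u$ on that edge is zero, giving no contribution. Combined with the measure identity
\begin{align*}
(u,v)_{\mu_\eta}=\mu_\eta(K_w)\,(u\circ G_w,v\circ G_w)_{\mu_\eta^w}
\end{align*}
(which follows from $u,v$ being supported in $K_w$ together with Lemma~\ref{lemmeas}), the eigenvalue equation $\E_\R(u,v)=\lambda(u,v)_{\mu_\eta}$ transforms into
\begin{align*}
\E_{\R^{(n)}}(u\circ G_w,v\circ G_w)=\lambda\delta_n\mu_\eta(K_w)\,(u\circ G_w,v\circ G_w)_{\mu_\eta^w}.
\end{align*}
Thus $\Phi$ induces a bijection of eigenspaces that rescales each eigenvalue by the factor $\delta_n\mu_\eta(K_w)$, and counting eigenvalues $\leq x$ on the left versus $\leq x\delta_n\mu_\eta(K_w)$ on the right yields the claimed identity.

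I do not foresee any real obstacle; if anything the Dirichlet version is cleaner than the Neumann one, since the vanishing of the $\D_k^I$ contributions on interior edges follows directly from the boundary condition together with $u|_{K_w^c}\equiv 0$, rather than from a continuity argument. The one point that deserves explicit verification is that the zero extension of a function in $\D_{\R^{(n)}}^0$ genuinely defines an element of $\D_\R$ (so that the surjectivity of $\Phi$ is justified); this is where the Dirichlet boundary condition is essential, and it follows from the regularity of the resistance form $(\E_\R,\F_\R)$ stated in Lemma~\ref{lem21} together with the definition of $\F_\R$.
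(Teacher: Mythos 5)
Your argument is correct and follows exactly the route the paper intends: the paper proves the Dirichlet case by declaring it ``analogous'' to the Neumann lemma, and your proposal simply carries out that analogy explicitly --- transporting eigenfunctions via $u\mapsto u\circ G_w$, using the rescaling identity for $\E_\R$ and the measure identity $(u,v)_{\mu_\eta}=\mu_\eta(K_w)(u\circ G_w,v\circ G_w)_{\mu_\eta^w}$, and checking that the Dirichlet condition on $G_w(V_0)$ corresponds to that on $V_0$. Your added remarks (why the sum in the rescaling identity collapses to the single term $w$, and why the zero extension lands in $\D_\R$) are consistent with, and slightly more detailed than, what the paper records.
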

There are many differences to the non-stretched case. We have different forms, domains and measures. To be able to apply renewal theory we need to get rid of these differences. We already have estimates for the measures. Now we need estimates on the quadratic forms. To be able to do this we need to tighten our conditions.\\

To show that strict periodic behaviour like on the Sierpinski Gasket is very unlikely we introduce slightly stricter conditions than in chapter~\ref{cond} solely for this chapter. For now we only consider sequences of matching pairs that fulfil the conditions of chapter~\ref{cond} with $r<\frac 35$. This excludes the highest possible value of $r$ but this chapter has the purpose of showing where the problems lie. So it suffices to consider these stronger conditions.\\

As a quick reminder the quadratic forms are of the following structure:
\begin{align*}
\mathcal{E}_{\mathcal{R}}(u,u)&=\lim\limits_{k\rightarrow \infty} \frac 1{\delta_k} Q_k(u,u)+\sum_{k\geq 1} \frac 1{\gamma_k} \mathcal{D}_k^I(u,u)\\
&\text{with } \ \delta_k=r_1\cdots r_k \ \text{ and } \ \gamma_k=r_1\cdots r_{k-1}\rho_k\\[.2cm]
\mathcal{E}_{\mathcal{R}^{(n)}}(u,u)&=\lim\limits_{k\rightarrow \infty} \frac 1{\delta_k^{(n)}} Q_k(u,u)+\sum_{k\geq 1} \frac 1{\gamma_k^{(n)}} \mathcal{D}_k^I(u,u)\\
&\text{with } \ \delta_k^{(n)}=r_{n+1}\cdots r_{n+k} \ \text{ and } \ \gamma_k^{(n)}=r_{n+1}\cdots r_{n+k-1}\rho_{n+k}\\
\end{align*}
To compare these forms we need estimates between $\delta_k$ and $\delta^{(n)}_k$, and also between $\gamma_k$ and $\gamma_k^{(n)}$. We have
\begin{align*}
\delta_k^{(n)}&=\delta_k \frac{r_{k+1}\cdots r_{k+n}}{\delta_n}
\end{align*}
With the estimates from chapter~\ref{cond} we get
\begin{align*}
\kappa_1\frac{r^n}{\delta_n}\delta_k\leq \delta_k^{(n)}\leq \kappa_2 \frac{r^n}{\delta_n}\delta_k
\end{align*}
For the $\gamma_k$:
\begin{align*}
\gamma_k^{(n)}&=\frac{\gamma_{n+k}}{\delta_n}\\
&=\gamma_k \frac{r_k\cdots r_{n+k-1}}{\delta_n}\frac{\rho_{n+k}}{\rho_k}
\end{align*}
Again with the estimates from chapter~\ref{cond} we have
\begin{align*}
\kappa_1 \frac{r^n}{\delta_n} \frac{\rho_{n+k}}{\rho_k} \gamma_k\leq \gamma_k^{(n)}\leq \kappa_2 \frac{r^n}{\delta_n}\frac{\rho_{n+k}}{\rho_k} \gamma_k
\end{align*}
With the stronger conditions we introduced for this chapter we get constants $\kappa_3\leq \frac{\rho_{n+k}}{\rho_k}\leq \kappa_4$ for all $n,k$ and thus all in all we get constants $c_1$ and $c_2$ such that
\begin{align*}
c_1\frac{r^n}{\delta_n}\E_{\R^{(n)}}(u,u)\leq \E_\R(u,u)\leq c_2\frac{r^n}{\delta_n} \E_{\R^{(n)}}(u,u)
\end{align*}
This also immediately implies that the domains coincide:
\begin{align*}
\D_\R=\D_{\R^{(n)}}, \ \text{ for all } n
\end{align*}
With these estimates we can get estimates on the eigenvalue counting functions.
We want to compare the eigenvalues and eigenvalue counting functions of $(\mathcal{E}_{\mathcal{R}},\D_\R,\mu_\eta)$ and $(\mathcal{E}_{\mathcal{R}^{(n)}},\D_{\R^{(n)}},\mu_\eta^w)$. To do this we use the Max-Min Principle (see \cite[Theorem 2, Chapter 10]{bs87}):
\begin{align*}
\lambda_k(\mathcal{E},\D,\nu)=\max_{\Phi\subset \mathcal{D}} \inf_{\substack{u\in \Phi\\ ||u||_\nu=1}} \mathcal{E}(u,u)
\end{align*}
where the maximum is taken over all subspaces $\Phi$ with co-dimension equal or less than $k-1$. The eigenvalues depend on the resistance form as well as the measure $\nu$. The occuring norm $||u||_\nu$ is the $L^2$ norm in $L^2(K,\nu)$. The condition in the infimum can be changed to $||u||_\nu\geq 1$ since it takes its lowest value at $||u||_\nu=1$. 
We make use of the following two estimates:
\begin{align*}
c_1\frac{r^n}{\delta_n}\mathcal{E}_{\mathcal{R}^{(n)}}(u,u)&\leq \mathcal{E}_\mathcal{R}(u,u)\leq c_2\frac{r^n}{\delta_n}\mathcal{E}_{\mathcal{R}^{(n)}}(u,u)\\
\sqrt{\frac{\mu_l(K_w)}{\mu_\eta(K_w)}}||f||_{\mu_\eta}&\leq ||f||_{\mu_\eta^w}\leq \sqrt{\frac{\mu_f(K_w)}{\mu_\eta(K_w)}} ||f||_{\mu_\eta}
\end{align*}
The maximum doesn't change since $\mathcal{D}_\mathcal{R}=\mathcal{D}_{\mathcal{R}^{(n)}}$. However we have different Hilbert-spaces $L^2(K,{\mu_\eta})$ and $L^2(K,\mu_\eta^w)$, that means the norm changes.
\begin{align*}
\lambda_k(\mathcal{E}_\mathcal{R},\D_\R,\mu_\eta)&=\max_{\substack{\Phi\subset \D_\R \\ \dim \D_\R /\Phi\leq k-1}} \inf_{\substack{u\in \Phi\\ ||u||_{\mu_\eta}\geq1}} \mathcal{E}_{\mathcal{R}}(u,u)\\
&\leq\max_{\substack{\Phi\subset \D_\R \\ \dim \D_\R /\Phi\leq k-1}} \inf_{\substack{u\in \Phi\\ ||u||_{\mu_\eta}\geq1}} c_2\frac{r^n}{\delta_n}\mathcal{E}_{\mathcal{R}^{(n)}}(u,u)
\end{align*}
We saw $||u||_{{\mu_\eta}^w}\leq \sqrt{\frac{\mu_f(K_w)}{\mu_\eta(K_w)}} ||u||_{\mu_\eta}$, that means the condition $||u||_{\mu_\eta^w}\geq \sqrt{\tfrac{\mu_f(K_w)}{\mu_\eta(K_w)}}$ is stronger than $||u||_{\mu_\eta}\geq 1$ and therefore the set over which the infimum is taken gets smaller and thus the infimum gets bigger.
\begin{align*}
\Rightarrow \lambda_k(\mathcal{E}_\mathcal{R},\D_\R,\mu_\eta)&\leq  c_2\frac{r^n}{\delta_n}\max_{\substack{\Phi\subset \mathcal{D}_\R\\ \dim \mathcal{D}_\R /\Phi\leq k-1}} \inf_{\substack{u\in \Phi\\ ||u||_{\mu_\eta^w}\geq \sqrt{\frac{\mu_f(K_w)}{\mu_\eta(K_w)}}}} \mathcal{E}_{\mathcal{R}^{(n)}}(u,u)\\
&= c_2\frac{r^n}{\delta_n}\max_{\substack{\Phi\subset \mathcal{D}_\R\\ \dim \mathcal{D}_\R /\Phi\leq k-1}} \inf_{\substack{u\in \Phi\\ ||u||_{\mu_\eta^w}\geq 1}}  \mathcal{E}_{\mathcal{R}^{(n)}}\left(\sqrt{\frac{\mu_f(K_w)}{\mu_\eta(K_w)}}u,\sqrt{\frac{\mu_f(K_w)}{\mu_\eta(K_w)}}u\right)\\
&=c_2\frac{r^n}{\delta_n}\frac{\mu_f(K_w)}{\mu_\eta(K_w)}\max_{\substack{\Phi\subset \mathcal{D}_\R\\ \dim \mathcal{D}_\R /\Phi\leq k-1}} \inf_{\substack{u\in \Phi\\ ||u||_{\mu_\eta^w}\geq 1}}  \mathcal{E}_{\mathcal{R}^{(n)}}(u,u)\\
&=c_2\frac{r^n}{\delta_n}\frac{\mu_f(K_w)}{\mu_\eta(K_w)}\lambda_k(\mathcal{E}_{\mathcal{R}^{(n)}},\D_\R,\mu_\eta^w)
\end{align*}
The other direction works the same, so we get:
\begin{align*}
c_1\tfrac{r^n}{\delta_n}\tfrac{\mu_l(K_w)}{\mu_\eta(K_w)}\lambda_k(\mathcal{E}_{\mathcal{R}^{(n)}},\D_\R,\mu_\eta^w)\leq \lambda_k(\mathcal{E}_{\mathcal{R}},\D_\R,\mu_\eta)\leq c_2\tfrac{r^n}{\delta_n}\tfrac{\mu_f(K_w)}{\mu_\eta(K_w)}\lambda_k(\mathcal{E}_{\mathcal{R}^{(n)}},\D_\R,\mu_\eta^w)
\end{align*}
For the eigenvalue counting function this means:
\begin{align*}
N(\mathcal{E}_{\mathcal{R}^{(n)}},\D_\R,\mu_\eta^w, \tfrac x{c_2\frac{r^n}{\delta_n}\frac{\mu_f(K_w)}{\mu_\eta(K_w)}})\leq N(\mathcal{E}_\mathcal{R},\D_\R,\mu_\eta,x)\leq N(\mathcal{E}_{\mathcal{R}^{(n)}},\D_\R,\mu_\eta^w, \tfrac x{c_1\frac{r^n}{\delta_n}\frac{\mu_l(K_w)}{\mu_\eta(K_w)}})
\end{align*}
If we change $\mathcal{D}_\R$ to $\mathcal{D}_\R^0$ we get the same results for the Dirichlet eigenvalues and counting function.\\

Adding this to the estimates from before leads to the following estimates for the eigenvalue counting functions for all $x\geq 0$ with 
\begin{align*}
\D_{\R,J_n}:=\{u\in L^2(K,\mu_\eta): u|_{J_n^c}\equiv 0, \exists f\in \mathcal{D}_\R: f|_{J_n}=u\}
\end{align*}
\begin{align*}
N^{\mu_\eta,\R}_D(x)&\leq N^{\mu_\eta,\R}_N(x)\\
&\leq \sum_{w\in\mathcal{A}^n}N(\mathcal{E}_\mathcal{R},\mathcal{D}_{\R,K_w},\mu_\eta,x)+N(\mathcal{E}_\R,\D_{\R,J_n},\mu_\eta,x)\\
&= \sum_{w\in\mathcal{A}^n} N(\mathcal{E}_{\mathcal{R}^{(n)}},\mathcal{D}_{\R^{(n)}},\mu_\eta^w,\mu_\eta(K_w)\delta_nx)+N(\mathcal{E}_\R,\D_{\R,J_n},\mu_\eta,x)\\
&\leq  \sum_{w\in\mathcal{A}^n} N(\mathcal{E}_{\mathcal{R}},\mathcal{D}_\R,\mu_\eta,\mu_\eta(K_w)\delta_n x c_2\frac{r^n}{\delta_n}\frac{\mu_f(K_w)}{\mu_\eta(K_w)})+N(\mathcal{E}_\R,\D_{\R,J_n},\mu_\eta,x)\\
&=  \sum_{w\in\mathcal{A}^n} N_N^{\mu_\eta,\R}(c_2 r^n \mu_f(K_w) x)+N(\mathcal{E}_\R,\D_{\R,J_n},\mu_\eta,x)\\
&\leq \sum_{w\in\mathcal{A}^n}( N_D^{\mu_\eta,\R}(c_2 r^n \mu_f(K_w) x) +3)+N(\mathcal{E}_\R,\D_{\R,J_n},\mu_\eta,x)
\end{align*}
The lower bound works analogously so all in all we get for $\eta\in(0,1)$
\begin{align*}
\sum_{w\in\mathcal{A}^n} N_D^{\mu_\eta,\R}(c_1 (\beta r)^n x)&\leq N_D^{\mu_\eta,\R}(x)\leq N_N^{\mu_\eta,\R}(x)\\&\leq \sum_{w\in\mathcal{A}^n} N_D^{\mu_\eta,\R}(c_2 (\tfrac r3)^n x)+ 3^{n+1}+N(\mathcal{E}_\R,\D_{\R,J_n},\mu_\eta,x)
\end{align*}
If we only use the line part of the measure $\mu_l$ we don't need the estimates on the $L^2$ norms. This leads to the following estimates
\begin{align*}
\sum_{w\in\mathcal{A}^n} N_D^{\mu_\eta,\R}(c_1 (\beta r)^n x)&\leq N_D^{\mu_\eta,\R}(x)\leq N_N^{\mu_\eta,\R}(x)\\&\leq \sum_{w\in\mathcal{A}^n} N_D^{\mu_\eta,\R}(c_2 (\beta r)^n x)+ 3^{n+1}+N(\mathcal{E}_\R,\D_{\R,J_n},\mu_\eta,x)
\end{align*}
with constants $c_1$ and $c_2$ for all $n$. \\

So for $\eta\in(0,1)$ we don't even get the same scaling. We have $\beta r$ for the lower and $\tfrac r3$ for the upper bound. This is due to the two different scalings of the measure. If we only use the line part of the measure we have the same scaling $\beta r$ on both sides but we still have the constants $c_1$ and $c_2$. In general they do not coincide, even not asymptotically for $n\rightarrow \infty$. We can't apply renewal theory here since we need the exact same value on both sides. \\

Furthermore since we can't get rid of the constants it is very unlikely to get strict periodic behaviour even with other methods. We lack some kind of symmetry which is necessary to get this very strict periodic behaviour in general. There are some special cases where we are able to get strict periodicity and we will handle them in chapter~\ref{special}.

\section{Existence of localized eigenfunctions and proof of non-convergence}\label{chaple}
So we saw that strict periodic behaviour as for the Sierpinski Gasket is not very likely. However we still want to answer the question of convergence. This means even if there is no strict periodic behaviour, there could still be oscillations. To answer this question we look at localized eigenfunctions just as for the Sierpinksi Gasket. As it turns out there still are localized eigenfunctions on the Stretched Sierpinski Gasket. To show this we first show the existence of so called Dirichlet-Neumann eigenfunctions. These are eigenfunctions that are simultaneously eigenfunctions of $-\Delta_D^{\mu,\R}$ as well as $-\Delta_N^{\mu,\R}$. Since we can construct localized eigenfunctions by the use of them we also call them pre-localized eigenfunctions.

\begin{lemma}
Let $\R$ be any sequence of matching pairs and $\eta\in (0,1]$. Then there exists a Dirichlet-Neumann eigenfunction $u$ with eigenvalue $\lambda$ of $(\E_\R,\D_\R,\mu_\eta)$. This means $\exists u\in \D_{\R}^0$ with $\E_\R(u,v)=\lambda(u,v)_{\mu_\eta}$, $\forall v\in \D_\R$. \label{lemdn}
\end{lemma}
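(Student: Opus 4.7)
The plan is to exploit the dihedral symmetry $D_3$ of the SSG. Let $\Gamma = D_3$ denote the group of symmetries of the equilateral triangle with vertices $V_0$; this group acts on $K$ by isometries and each $g \in \Gamma$ preserves the cell structure $\{K_w\}$ and the edges $\{e_i^w\}$. By construction in \cite{afk17}, the form $\mathcal{E}_\mathcal{R}$ is invariant under $T_g u := u \circ g$, and the measures $\mu_l$, $\mu_f$ (hence $\mu_\eta$) are $\Gamma$-invariant as well. Consider the sign representation $\operatorname{sgn}\colon \Gamma \to \{\pm 1\}$ which is $+1$ on the three rotations and $-1$ on the three reflections, and the associated projection
\begin{align*}
P_- := \frac{1}{6} \sum_{g \in \Gamma} \operatorname{sgn}(g)\, T_g.
\end{align*}
The isotypic subspace $H_- := P_-\, L^2(K,\mu_\eta)$ consists of those $f$ satisfying $T_g f = \operatorname{sgn}(g) f$ for all $g\in\Gamma$, and $P_-$ maps $\mathcal{D}_\mathcal{R}$ into itself because $\mathcal{D}_\mathcal{R}$ is $\Gamma$-invariant.

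The decisive observation is that every $f \in H_- \cap C(K)$ vanishes on $V_0$: each $p_i \in V_0$ is the fixed point of some reflection $\sigma_i \in \Gamma$, so $f(p_i) = (T_{\sigma_i}f)(p_i) = -f(p_i)$, forcing $f(p_i)=0$. Hence
\begin{align*}
H_- \cap \mathcal{D}_\mathcal{R} \subset \mathcal{D}_\mathcal{R}^0.
\end{align*}

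Next I would verify that this intersection is nontrivial. Choose $n$ large enough so that one can pick $w \in \mathcal{A}^n$ with $K_w$ disjoint from its five $\Gamma$-images and from $V_0$, and let $\varphi \in \mathcal{D}_\mathcal{R}$ be any nontrivial function supported in the interior of an edge $e_i^w$ (for instance, a smooth bump along this one-dimensional segment, which is clearly in $\mathcal{D}_\mathcal{R}$ with finite energy). Then the six summands in $P_-\varphi$ have pairwise disjoint supports contained in $\bigcup_{g\in\Gamma} g(K_w)$, so $P_-\varphi\neq 0$, $P_-\varphi \in \mathcal{D}_\mathcal{R}$ by $\Gamma$-invariance of the domain, and by the previous paragraph $P_-\varphi \in \mathcal{D}_\mathcal{R}^0$.

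Finally, because each $T_g$ is unitary on $L^2(K,\mu_\eta)$ and preserves $\mathcal{E}_\mathcal{R}$, the operator $-\Delta_N^{\mu_\eta,\mathcal{R}}$ commutes with every $T_g$ and hence with $P_-$. Consequently $H_-$ is a closed invariant subspace of the self-adjoint operator $-\Delta_N^{\mu_\eta,\mathcal{R}}$, which has discrete spectrum by Lemma~\ref{lem23}. Since $H_-$ is nonzero, the restriction of $-\Delta_N^{\mu_\eta,\mathcal{R}}$ to $H_-$ admits an eigenfunction $u \in H_- \cap \operatorname{dom}(-\Delta_N^{\mu_\eta,\mathcal{R}}) \subset H_- \cap \mathcal{D}_\mathcal{R}$, with some eigenvalue $\lambda$; this $u$ lies in $\mathcal{D}_\mathcal{R}^0$ by the vanishing argument and satisfies $\mathcal{E}_\mathcal{R}(u,v) = \lambda (u,v)_{\mu_\eta}$ for every $v \in \mathcal{D}_\mathcal{R}$ (the Neumann eigenvalue equation), which is exactly the claimed Dirichlet--Neumann property.

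The conceptual work is all in the symmetry setup; the only mildly delicate step is the explicit construction of a nonzero element of $H_- \cap \mathcal{D}_\mathcal{R}$. Harmonic functions alone will not suffice (as a $\Gamma$-representation they decompose into the trivial and the standard irrep, with no sign component), which is why I would introduce a bump function supported deep inside one cell so that $\Gamma$ acts on its support without overlap and the projection $P_-\varphi$ cannot cancel.
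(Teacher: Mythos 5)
Your proof is correct, and it reaches the lemma by a genuinely different route than the paper, even though both arguments ultimately live in the same place: the sign-isotypic component of the $D_3$-action. The paper works constructively on a fundamental domain: it cuts $K$ into six pieces $\tilde K_i$ along the bisecting lines, imposes Dirichlet conditions on the cut set $\tilde V_i$, takes an eigenfunction $\varphi$ of the resulting form $(\E_{\R,1},\D_{\R,1}^0)$, and glues six alternating copies into a global $\Phi$; the work then goes into verifying, via the symmetrization $v\mapsto \tfrac{\omega+\omega\sigma+\omega\sigma^2}{3}$ with $\omega=\tfrac{v-v\circ\tau}{2}$, that $\Phi$ is an eigenfunction against \emph{all} test functions $v\in\D_\R$. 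Your glued function is exactly an element of $H_-$, and conversely $H_-\cap\D_\R$ is isomorphic to $\D_{\R,1}^0$, so the two constructions produce the same eigenfunctions; but you replace the explicit gluing verification by the abstract statement that $P_-$ is an orthogonal projection commuting with $-\Delta_N^{\mu_\eta,\R}$, so the discrete spectrum restricts to the reducing subspace $H_-\neq\{0\}$. Your vanishing argument at the fixed points $p_i$ and your bump-function proof that $H_-\cap\D_\R\neq\{0\}$ are both sound (a bump compactly supported in the open interior of an edge deep inside a cell with six disjoint $\Gamma$-images lies in $\F_\R$ with zero $\E^\Sigma$-part and finite line energy). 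What your route gives up is precisely what the author flags as the reason for modifying Barlow--Kigami: by identifying $\lambda$ as an eigenvalue of the concrete fundamental-domain problem $(\E_{\R,1},\D_{\R,1}^0)$, the paper can later bound $\lambda_n$ uniformly in $n$ and $\eta$ by exhibiting an explicit test function on $\tilde K_1$; your argument proves existence cleanly but, as stated, gives no handle on which eigenvalue you land on. That information could be recovered from your setup by taking the bottom of the spectrum on $H_-$ and bounding the Rayleigh quotient of $P_-$ applied to a suitable test function, but you would need to add that step to feed into the quantitative estimates of chapter~\ref{chaple}.
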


\begin{proof}
The proof of the existence of localized eigenfunctions follows the arguments in \cite{bk97} where the existence was shown for p.c.f. self-similar sets under certain conditions on the symmetry of the set. However the Stretched Sierpinski Gasket is not self-similar but the strong symmetry suffices to apply the ideas. We modify the ideas slightly to get more information about the eigenvalue.\\

On the Stretched Sierpinski Gasket we have the following symmetries, which are the same as for the Sierpinski Gasket. These symmetries are fulfiled by the geometry of the set, the resistance forms and the measures $\mu_\eta$.
\begin{itemize}
\item Three Rotations: $0^\circ$, $120^\circ$ and $240^\circ$
\item Three Reflections: One on each bisecting line
\end{itemize}
By $\sigma$ we denote the $120^\circ$ rotation and by $\tau$ the reflection at the bisecting line trough $p_1$ as you can see in Figure~\ref{symmetries}.
\begin{figure}[H]
\centering
\includegraphics[width=0.63\textwidth]{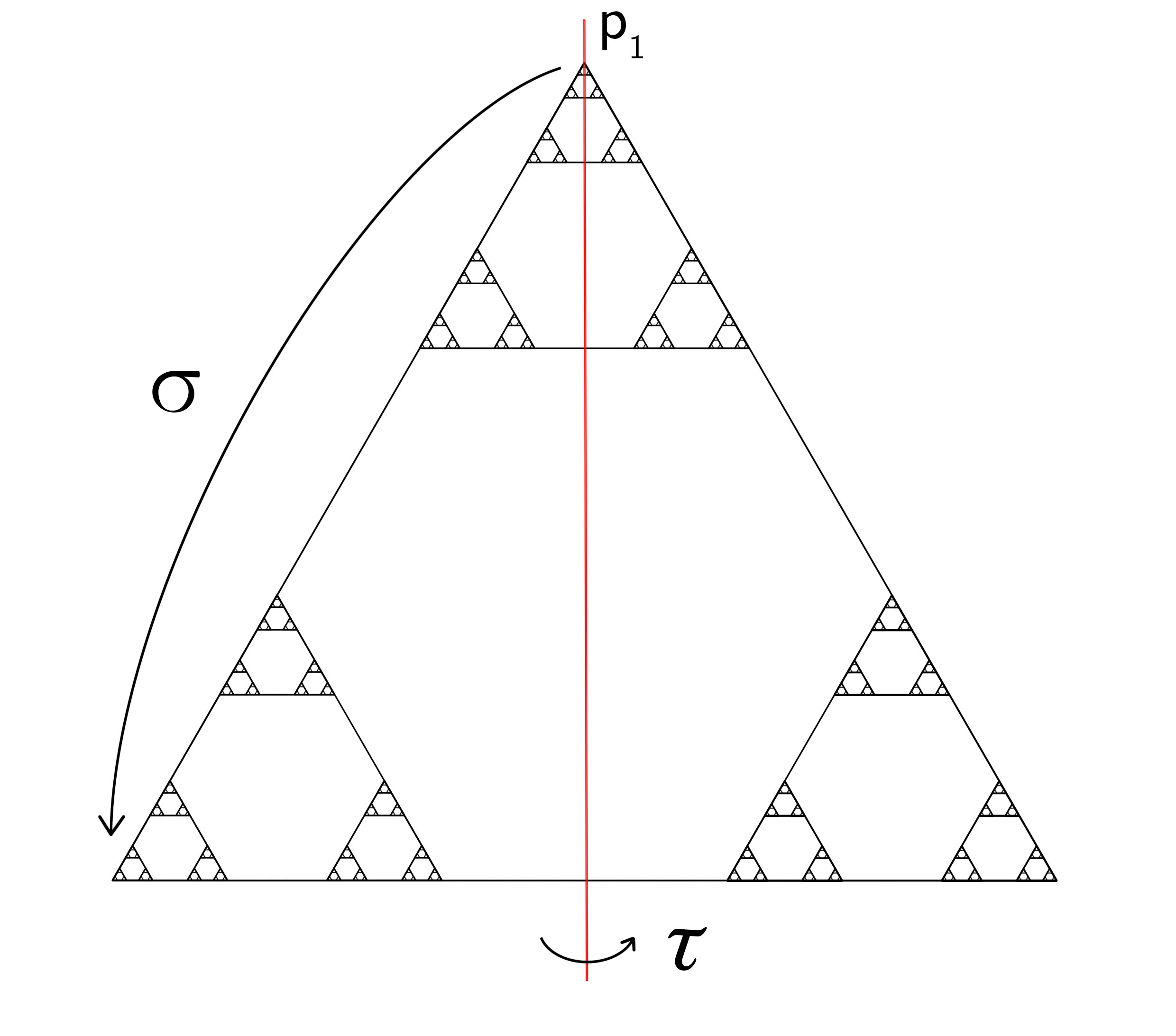}
\caption{Symmetries of the SSG.}\label{symmetries}
\end{figure}
We divide $K$ into six parts in the following manner illustrated in Figure~\ref{ssg6}.
\begin{figure}[H]
\centering
\includegraphics[width=0.6\textwidth]{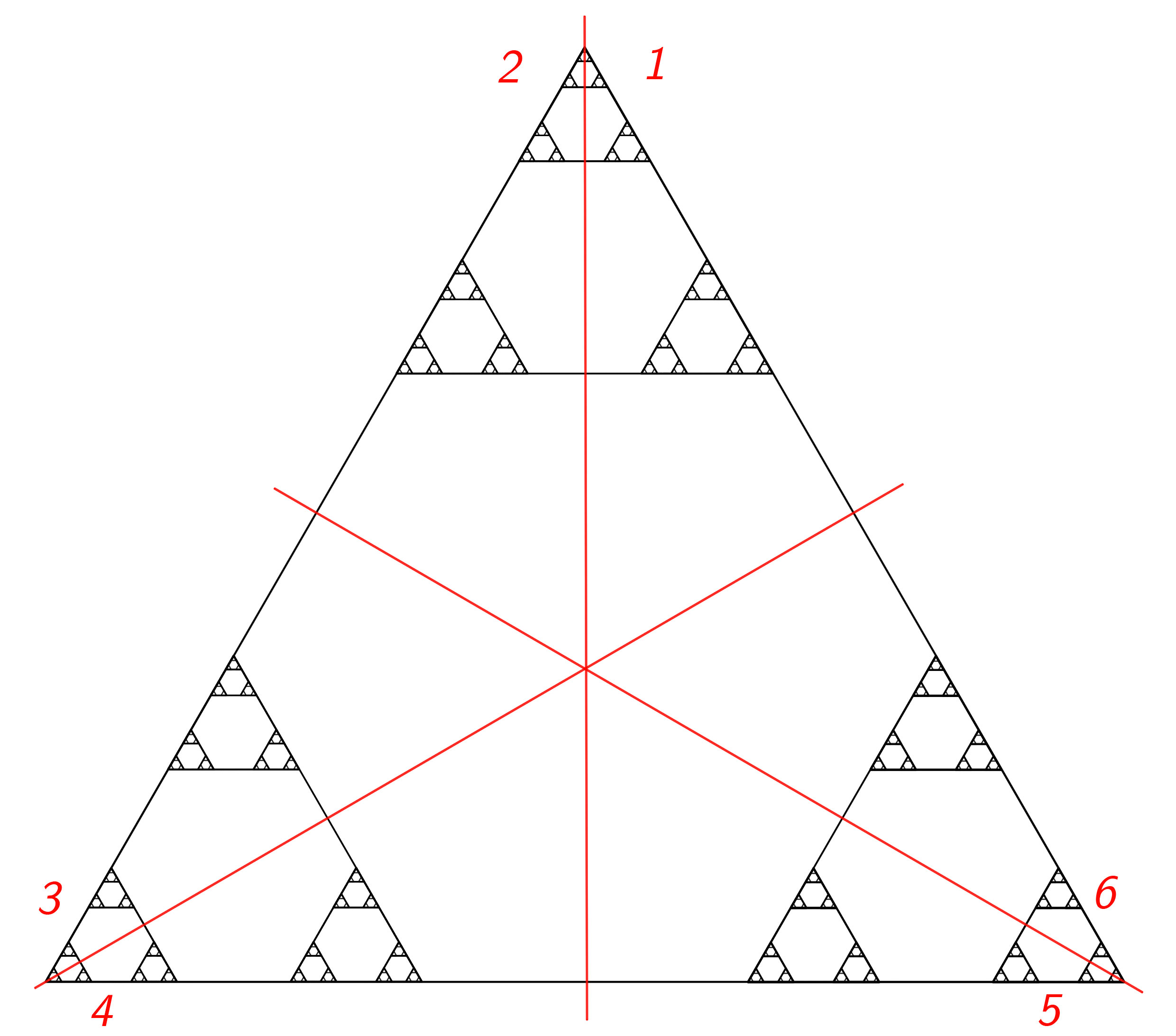}
\caption{Dividing the SSG into six parts.}\label{ssg6}
\end{figure}
The individual parts are denoted by $\tilde K_i$ and the intersections $\tilde V_i:=\tilde K_i \cap (\tilde K_{i+1}\cup\tilde K_{i-1})$ where $i$ is taken modulo $6$. That means $\tilde V_i$ is the intersection of $\tilde K_i$ with the bisecting lines indicated in Figure~\ref{ssg6}. On $\tilde V_i$ we introduce Dirichlet boundary conditions. There are countably infinitely many of those points and all but one lie in the middle of one-dimensional lines.
\begin{align*}
\D_{\R,i}^0:=\{u\in \D_\R : \ u|_{\tilde V_i}\equiv 0,\ \operatorname{supp}(u)\subset \tilde K_i\}
\end{align*}
Denote the parts of the quadratic forms by $\E_{\R,i}:=\E_\R|_{\D_{\R,i}^0\times \D_{\R,i}^0}$.
\begin{lemma}
$(\E_{\R,i},\D_{\R,i}^0)$ is a regular Dirichlet form on $L^2(K,\mu_\eta|_{\tilde K_i\backslash \tilde V_i})$ with discrete non negative spectrum and for $u,v\in \D_{\R,1}^0 \oplus \cdots \oplus \D_{\R,6}^0$ we have
\begin{align*}
\E_\R(u,v)=\sum_{i=1}^6 \E_{\R,i}(u|_{\tilde K_i},v|_{\tilde K_i})
\end{align*}\label{lem6parts}
\end{lemma}
\begin{proof} 
From \cite[Theorem 10.3]{kig12} \cite[Theorem 4.4.3]{fot} we know that $(\E_{\R,i},\D_{\R,i}^0)$ is a regular Dirichlet form on $L^2(K,\mu_\eta|_{\tilde K_i\backslash \tilde V_i})$. The spectrum is discrete since $\D_{\R,i}^0\subset \D_\R$ (see \cite[Theo. 4, Chap. 10]{bs87}). The $\D_{\R,i}^0$ are orthogonal to each other with respect to $\E_\R$ as well as the inner product of $L^2(K,\mu_\eta)$. Therefore we have the desired equality.
 \end{proof}
Let $\varphi$ be any eigenfunction of $(\E_{\R,1},\D_{\R,1}^0)$ with measure $\mu_\eta$ and $\eta\in (0,1]$ with eigenvalue $\lambda$. We can use this $\varphi$ to construct a Dirichlet-Neumann eigenfunction on SSG. By $\tilde \varphi$ we denote the reflection of $\varphi$ along the bisecting line trough $p_1$. I.e. $\tilde \varphi=\varphi\circ \tau$. We glue these functions together in the following fashion which you can see in Figure~\ref{gluing}.
\begin{align*}
\varphi_1&:=\varphi\\
\varphi_2&:=-\tilde \varphi\\
\varphi_3&:=\varphi\circ \sigma^2\\
\varphi_4&:=-\tilde\varphi\circ \sigma^2\\
\varphi_5&:=\varphi\circ \sigma\\
\varphi_6&:=-\tilde\varphi\circ \sigma\\
\end{align*}
\begin{figure}[H]
\centering
\includegraphics[width=0.6\textwidth]{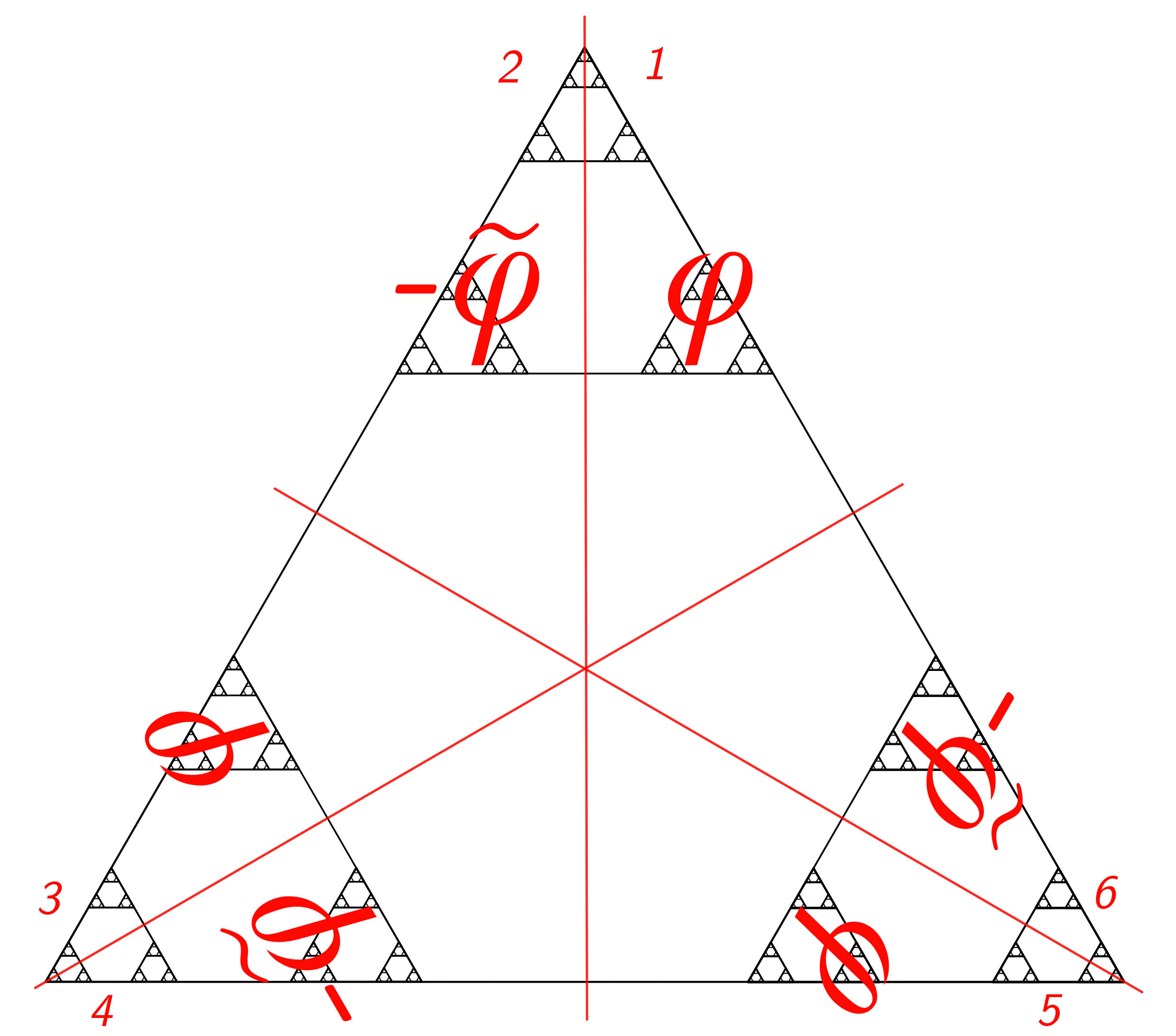}
\caption{Gluing $\protect\varphi$.}\label{gluing}
\end{figure}
We denote the function on $K$ as $\Phi:=\sum_{i=1}^6 \varphi_i$. Thanks to symmetry $\varphi_i$ is an eigenfunction of $(\E_{\R,i},\D_{\R,i}^0)$ with measure $\mu_\eta|_{\tilde K_i}$ and eigenvalue $\lambda$. And the Dirichlet conditions and Lemma~\ref{lem6parts} ensure that $\Phi\in \D_{\R}^0$. We now want to show, that $\Phi$ itself is a Dirichlet-Neumann eigenfunction i.e. $\E_\R(\Phi,v)=\lambda(\Phi,v)_{\mu_\eta}$ for all $v\in \D_\R$. Due to the symmetry we have the following equations:
\setcounter{equation}{0}
\begin{align}
\E_\R(\Phi,v)&=\E_\R(\Phi\circ\tau,v\circ\tau)=\E_\R(\Phi,-v\circ \tau)\label{eq1}\\
\E_\R(\Phi,v)&=\E_\R(\Phi\circ\sigma,v\circ\sigma)=\E_\R(\Phi,v\circ\sigma)\label{eq2}\\
\E_\R(\Phi,v)&=\E_\R(\Phi\circ\sigma^2,v\circ\sigma^2)=\E_\R(\Phi,v\circ\sigma^2)\label{eq3}
\end{align}
From (\ref{eq1}) we get
\begin{align*}
\E_\R(\Phi,v)=\E_\R(\Phi,\underbrace{\tfrac{v-v\circ\tau}2}_{\omega:=})
\end{align*}
Now $\omega$ is anti-symmetric w.r.t. $\tau$ and therefore vanishes on the bisecting line through $p_1$. If we apply (\ref{eq2}) and (\ref{eq3}) to $\omega$ we get
\begin{align*}
\E_\R(\Phi,v)=\E_\R(\Phi,\omega)=\E_\R(\Phi,\underbrace{\tfrac{\omega+\omega\sigma+\omega\sigma^2}3}_{f:=})
\end{align*}
Since $\omega$ vanishes on the bisecting line through $p_1$ we know that $f$ vanishes on $\bigcup_{i=1}^6 \tilde V_i$. That means $f\in \bigoplus_{i=1}^6 \D_{\R,i}^0$ and thus
\begin{align*}
\E_\R(\Phi,v)&=\E_\R(\Phi,f)\\
&\stackrel{(i)}{=}\sum_{i=1}^6\E_{\R,i}(\Phi|_{\tilde K_i},f|_{\tilde K_i})\\
&\stackrel{(ii)}{=}\sum_{i=1}^6 \lambda (\Phi|_{\tilde K_i},f|_{\tilde K_i})_{\mu_\eta|_{\tilde K_i}}\\
&\stackrel{(iii)}{=}\lambda (\Phi,f)_{\mu_\eta}\\
&\stackrel{(iv)}{=}\lambda (\Phi,v)_{\mu_\eta}
\end{align*}
(i) holds due to Lemma~\ref{lem6parts}, (ii) since the parts of $\Phi$ are eigenfunctions, (iii) is clear and (iv) is true since $\mu_\eta$ fulfils the same symmetries as $\E_\R$. Therefore $\Phi$ is a pre-localized eigenfunction.
\end{proof}

We can use the same idea as in the case of the Sierpinski Gasket to get localized eigenfunctions from the pre-localized eigenfunctions. Recall that $\mu_\eta^{(n)}=\mu_\eta^w$ for $|w|=n$. In Lemma~\ref{lemmeas} we showed that this was again a convex combination. Therefore Lemma~\ref{lemdn} is applicable. Let $u^{(n)}$ be a pre-localized eigenfunction with eigenvalue $\lambda_n$ of $(\mathcal{E}_{\mathcal{R}^{(n)}},\mathcal{D}_{\mathcal{R}^{(n)}},\mu_\eta^{(n)})$. Now define for $w\in\mathcal{A}^n$:
\begin{align*}
u_w(x):=\begin{cases}
u\circ G_w^{-1}(x) \ &,x \in K_w\\0 &, \text{elsewhise} 
\end{cases}
\end{align*}
Then $u_w^{(n)}$ is a localized eigenfunction of $(\mathcal{E}_\mathcal{R},\mathcal{D}_\R)$ and $(\mathcal{E}_\mathcal{R},\mathcal{D}_\R^0)$ with measure $\mu_\eta$ and eigenvalue $\frac{\lambda_n}{\delta_n\mu(K_w)}$ in the sense that $\operatorname{supp}(u_w)\subset K_w$.\\

To show this we notice that $u_w^{(n)}\in\mathcal{D}_\R^0$. Since $u^{(n)}\in\mathcal{D}_\R^0$ we have $u_w^{(n)}|_{V_0}\equiv 0$ and $u_w^{(n)}\in C(K)$ and for the finiteness of the quadratic form $\mathcal{E}_\mathcal{R}$:
\begin{align*}
\mathcal{E}_{\mathcal{R}}(u_w^{(n)},u_w^{(n)})&=\frac 1{\delta_n} \mathcal{E}_{\mathcal{R}^{(n)}}(u^{(n)}\circ G_w^{-1}\circ G_w,u^{(n)}\circ G_w^{-1}\circ G_w)\\
&=\frac 1{\delta_n} \mathcal{E}_{\mathcal{R}^{(n)}}(u^{(n)},u^{(n)})\\
&<\infty ,\qquad \text{since } u^{(n)}\in\mathcal{D}_{\mathcal{R}^{(n)}}
\end{align*}

Now for all $v\in\mathcal{D}_\R$:
\begin{align*}
\mathcal{E}_\mathcal{R}(u_w^{(n)},v)&=\frac 1{\delta_n}\mathcal{E}_{\mathcal{R}^{(n)}}(u^{(n)},\underbrace{v\circ G_w}_{\in\mathcal{D}_{\mathcal{R}^{(n)}}})\\
&=\frac 1{\delta_n}\lambda_n(u^{(n)},v\circ G_w)_{\mu_\eta^w}\\
&=\frac {\lambda_n}{\delta_n}\int_K u^{(n)} \cdot v\circ G_w \ d\mu_\eta^w\\
&=\frac {\lambda_n}{\delta_n}\frac 1{\mu(K_w)}\int_{K_w} u^{(n)}\circ G_w^{-1} \cdot v\  d\mu_\eta \\
&=\frac {\lambda_n}{\delta_n\mu_\eta(K_w)} (u_w^{(n)},v)_{\mu_\eta}
\end{align*}
Therefore $u_w^{(n)}$ is an eigenfunction of $(\E_\R,\D_\R, \mu_\eta)$. We can do this for any $n$-cell and therefore the multiplicity of the eigenvalue $\frac{\lambda_n}{\delta_n\mu_\eta(K_w)}$ is at least $3^n$ since there are that many $n$-cells. We define $\mu_\eta(n):=\mu_\eta(K_w)$.\\

This means we have shown the following result.
\begin{lemma}
Let $\R$ be any sequence of matching pairs and $\mu_\eta$ with $\eta\in (0,1]$. Then for all $n\in \mathbb{N}$ and $w\in \A^n$ there exists an eigenfunction $u_w$ of $-\Delta_D^{\mu_\eta,\R}$ with $\operatorname{supp}(u_w)\subset K_w$ and multiplicity of the corresponding eigenvalue at least $3^n$.
\end{lemma}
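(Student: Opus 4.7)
The plan is to package the construction already sketched in the paragraphs preceding the lemma into a clean two-step argument: first apply Lemma~\ref{lemdn} to obtain a pre-localized eigenfunction on the form associated to the shifted sequence $\R^{(n)}$, and then transplant it into each $n$-cell of $K$ via the contraction maps $G_w$, using the rescaling identities for $\E_\R$ and $\mu_\eta$ to check that the transplants are genuine Dirichlet eigenfunctions of the full system.

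First, I would invoke Lemma~\ref{lemmeas} to record that $\mu_\eta^{(n)} = \mu_{\eta^{(n)}}$ is again a convex combination of $\mu_l$ and $\mu_f$, so Lemma~\ref{lemdn} applies to $(\E_{\R^{(n)}},\D_{\R^{(n)}},\mu_\eta^{(n)})$ and yields a pre-localized eigenfunction $u^{(n)} \in \D_{\R^{(n)}}^0$ with some eigenvalue $\lambda_n > 0$. Next, for each $w \in \A^n$, I would define $u_w^{(n)}$ by composition with $G_w^{-1}$ on $K_w$ and extension by $0$ outside; continuity across $\partial K_w = G_w(V_0)$ follows because $u^{(n)}$ vanishes on $V_0$ (as a Dirichlet eigenfunction), and the energy rescaling $\E_\R(u_w^{(n)},u_w^{(n)}) = \delta_n^{-1} \E_{\R^{(n)}}(u^{(n)},u^{(n)}) < \infty$ shows $u_w^{(n)} \in \D_\R^0$.

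To verify the eigenvalue equation, I would test against an arbitrary $v \in \D_\R$ and chain the three identities already written out in the excerpt: the energy rescaling sends $v$ to $v \circ G_w \in \D_{\R^{(n)}}$, the Dirichlet--Neumann property of $u^{(n)}$ gives $\E_{\R^{(n)}}(u^{(n)}, v\circ G_w) = \lambda_n (u^{(n)}, v \circ G_w)_{\mu_\eta^w}$ (crucially this is Neumann on the full space, not just the subspace of functions supported in $K_w$, which is what makes $u_w^{(n)}$ a valid test element against every $v \in \D_\R$), and finally the measure rescaling $\mu_\eta^w = \mu_\eta(K_w)^{-1} \mu_\eta \circ G_w$ from Lemma~\ref{lemmeas} converts the inner product back to one on $K$. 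Collecting constants gives eigenvalue $\lambda_n/(\delta_n \mu_\eta(K_w))$, which is the same for all $w \in \A^n$ because $\mu_\eta(K_w)$ depends only on $|w|$.

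Finally, the multiplicity claim follows from disjoint supports: the functions $\{u_w^{(n)}\}_{w \in \A^n}$ have pairwise essentially disjoint supports (they overlap only on the finite set $V_n$), so they are linearly independent in $L^2(K,\mu_\eta)$, and there are exactly $3^n = |\A^n|$ of them. The main subtlety worth emphasizing is the Neumann half of the pre-localized property of $u^{(n)}$: without it, $u_w^{(n)}$ would only be a Dirichlet eigenfunction on $K_w$ and the continuity condition on $V_n \setminus V_0$ would fail, so one cannot test against arbitrary $v \in \D_\R$. Everything else is bookkeeping through the two rescaling identities.
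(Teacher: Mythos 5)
Your proposal is correct and follows essentially the same route as the paper: apply Lemma~\ref{lemmeas} to see that $\mu_\eta^{(n)}$ is again a convex combination so that Lemma~\ref{lemdn} yields a pre-localized eigenfunction for $(\E_{\R^{(n)}},\D_{\R^{(n)}},\mu_\eta^{(n)})$, transplant it into each $n$-cell via $G_w^{-1}$ and extension by zero, and verify the eigenvalue relation against all $v\in\D_\R$ using the energy and measure rescaling identities, the Neumann half of the pre-localized property being exactly what allows testing against arbitrary $v$. Your explicit remarks on continuity across $G_w(V_0)$ and on linear independence via disjoint supports only spell out what the paper leaves implicit.
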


However the pre-localized eigenfunction $u^{(n)}$ with eigenvalue $\lambda_n$ depend on $\mathcal{R}^{(n)}$ and $\mu_\eta^{(n)}$ and thus it is a different one for every $n$. This means $\lambda_n$ may be not the same for all $n$. This is a different situation in comparison to the self-similar case. In the case of the Sierpinski Gasket there was only one pre-localized eigenfunction necessary.

The other scaling parameters $\delta_n$ and $\mu_\eta(n)$ are the right ones but to be able to calculate the growing rate of the eigenvalues of localized eigenfunctions we need further information about $\lambda_n$ such as if it is bounded. Since our proof of Lemma~\ref{lemdn} was slightly different than the one in \cite{bk97}	we can use this to get estimates on $\lambda_n$.\\

In the proof of Lemma~\ref{lemdn} we saw that the eigenvalue of $\Phi$ is the same as the one of $(\E_{\R,1},\D_{\R,1}^0)$ with $\mu_\eta$. We are able to get estimates on the first eigenvalue there by
\begin{align*}
\lambda_1=\inf_{u\in \D_{\R,1}^0}\frac{\E_\R(u,u)}{||u||_{\mu_\eta}^2}
\end{align*}
We want to find a function $u\in \D_{\R,1}^0$ such that this value is bounded uniformly from above for all $\mathcal{R}^{(n)}$ and all $\mu_\eta$ with $\eta\in(0,1]$. To do this we choose the values of $u$ on $V_3\cap \tilde K_1$ as indicated in Figure~\ref{ssghalfdirich} and extend harmonically. We only consider sequences $\R$ that fulfil the conditions of chapter~\ref{cond}.
\begin{figure}[H]
\centering
\includegraphics[width=0.6\textwidth]{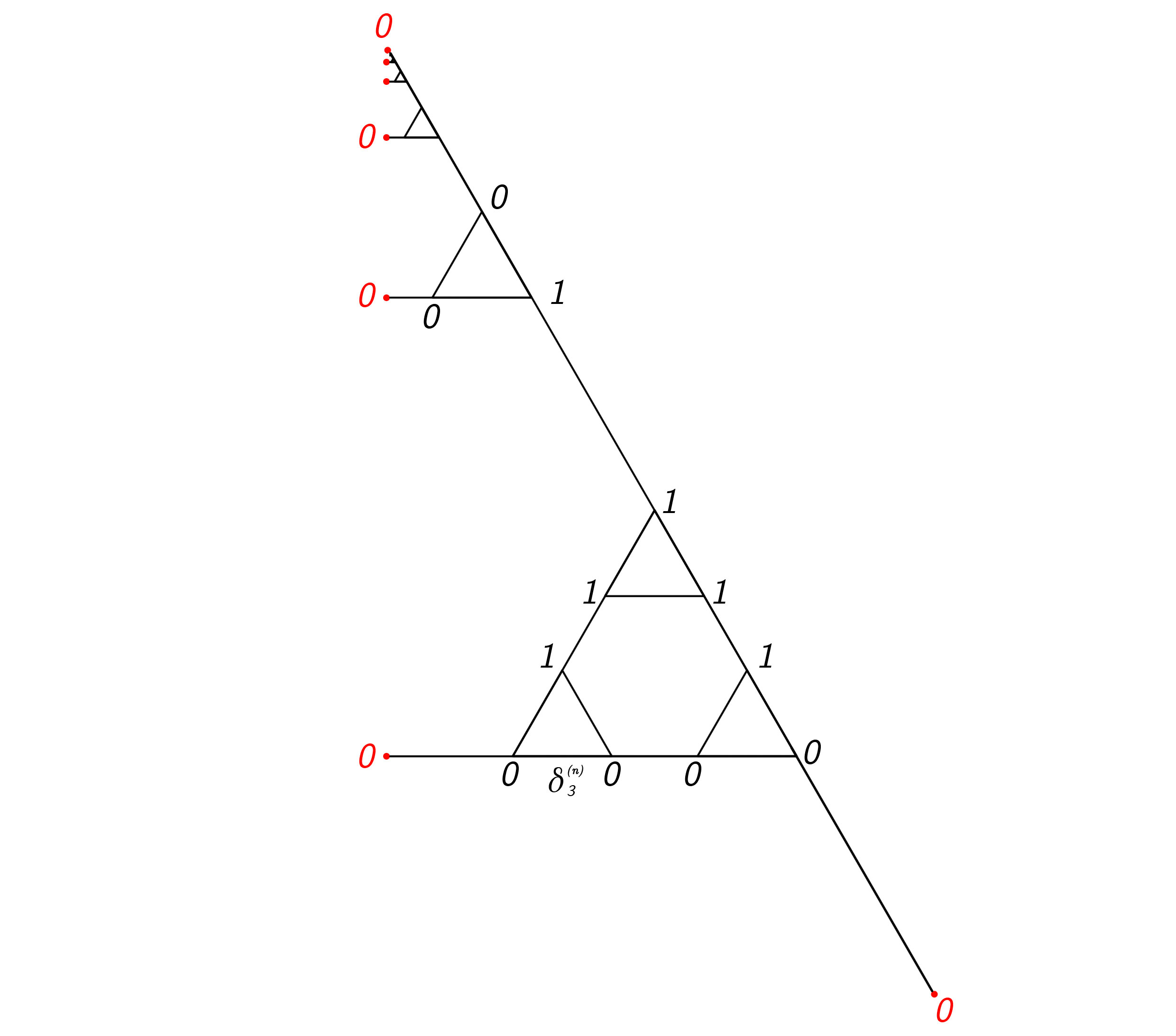}
\caption{Construction of $\protect u$ on $\protect\tilde K_1$.}\label{ssghalfdirich}
\end{figure}
\noindent Then the energy of $u$ is
\begin{align*}
\E_{\R^{(n)}}(u,u)=6\frac 1{\delta_3^{(n)}} \leq \frac 6{\kappa_1 r^3}
\end{align*}
Also since we have a $3$-cell $K_{\tilde w}$ where $u$ is constant $1$ we get an estimate on the $L^2$ norm.
\begin{align*}
||u||_{\mu_\eta}^2=\int_{\tilde K_1} u^2 d\mu_\eta\geq \int_{K_{\tilde w}} 1 d\mu_\eta=\mu_\eta(K_{\tilde w})=\eta \beta^3+(1-\eta)(\tfrac 13)^3\geq \beta^3 
\end{align*}
Therefore the first eigenvalue of $(\E_{\R^{(n)},1}, \D_{\R^{(n)},1}^0)$ with measure $\mu_\eta|_{\tilde K_1\backslash \tilde V_1}$ is bounded by 
\begin{align*}
\lambda_1\leq \frac 6{\kappa_1 r^3\beta^3}
\end{align*}
This constant is independent of $n$ as well as $\eta$.\\

Therefore the localized eigenfunctions on SSG give us a sequence of eigenvalues 
\begin{align*}
\nu_n=\frac{\lambda_n}{\delta_n\mu_\eta(n)}\leq \begin{cases} \tilde c \left(\frac 3r\right)^n&, \eta\in(0,1)\\ 
\tilde c (\beta r)^{-n}&, \eta=1
\end{cases}
\end{align*}
with multiplicities at least $3^n$. \\

For completeness we also want to get a lower bound for $\lambda_n$. We do not need really need it for the argument, nonetheless it shows that the localized eigenfunctions are indeed responsible for the asymptotic growing of the eigenvalue counting function.

$\lambda_n$ is the first eigenvalue of $(\E_{\R^{(n)}},\D_{\R^{(n)}},\mu^{(n)}_\eta)$. From \cite[Lemma 7.19]{afk17} we know that
\begin{align*}
|u(p)-u(q)|^2\leq 16 \E_\R(u,u), \ \forall u\in D_\R
\end{align*}
Essentially this means, that the diameter of $K$ with respect to the resistance metric is bounded by $16$, which is in particular independent of $\R$. For $u\in \D_{\R,1}^0$ and $p_1\in V_0$ we have
\begin{align*}
\E_\R(u,u)&\geq \frac 1{16} |u(x)-u(p_1)|^2\\
&= \frac 1{16} |u(x)|^2\\[.2cm]
\Rightarrow \E_\R(u,u)&\geq \frac 1{16}\int_K |u(x)|^2d\mu_\eta\\
&=||u||^2_{\mu_\eta} 
\end{align*}
This means the first Dirichlet eigenvalue of $(\E_1,\D_{\R,1}^0)$ with measure $\mu_\eta$ is at least $\frac 1{16}$. This is independent of the measure $\mu_\eta$ and the sequence of matching pairs $\R$. Thus we have
\begin{align*}
\lambda_n\geq \frac 1{16}
\end{align*}
All together we have found a sequence of eigenvalues $\nu_n$ with multiplicities at least $3^n$ such that
\begin{align*}
\left.
\begin{array}{ll}
\tilde c_1 \left(\frac 3r\right)^n\\
\tilde c_1 \left( \beta r\right)^{-n}
\end{array}
\right\}
\leq \nu_n\leq\begin{cases}
 \tilde c_2 \left(\frac 3r\right)^n&, \eta\in(0,1)\\
 \tilde c_2 \left( \beta r\right)^{-n}&, \eta=1
\end{cases}
\end{align*}
with constants $\tilde c_1, \tilde c_2$ independent of $n$. \\

We are now able to show that we can't have convergence. Again we want to emphasize that the upper estimate is the one we need. The lower estimate was actually already implied by Theorem~\ref{theo24}.\\

Consider $\eta\in(0,1)$. From Theorem~\ref{theo24} we know that 
\begin{align*}\limsup_{x\rightarrow\infty} N_D^{\mu_\eta,\R}(x)x^{-\frac 12 d_S}-\liminf_{x\rightarrow\infty} N_D^{\mu_\eta,\R}(x)x^{-\frac 12 d_S}<\infty\end{align*}

With $N_D^{\mu_\eta,\R}(x)_-:=\lim\limits_{\epsilon \searrow 0}N_D^{\mu_\eta,\R}(x-\epsilon)$ and $\mu_\eta(n):=\mu_\eta(K_w)$ for $|w|=n$:
\begin{align*}
\limsup_{x\rightarrow\infty} &N_D^{\mu_\eta,\R}(x)x^{-\frac 12 d_S}-\liminf_{x\rightarrow\infty} N_D^{\mu_\eta,\R}(x)x^{-\frac 12 d_S}\\
\geq &\limsup_{n\rightarrow\infty} N_D^{\mu_\eta,\R}\left(\frac{\lambda_n}{\delta_n\mu_\eta(n)}\right)\cdot \left(\frac{\lambda_n}{\delta_n\mu_\eta(n)}\right)^{-\frac 12 d_S}\\
&\pushright{-\liminf_{n\rightarrow\infty} N_D^{\mu_\eta,\R}\left(\frac{\lambda_n}{\delta_n\mu_\eta(n)}\right)_-\cdot \left(\frac{\lambda_n}{\delta_n\mu_\eta(n)}\right)^{-\frac 12 d_S}}\\
\geq& \lim_{n\rightarrow \infty}\left(N_D^{\mu_\eta,\R}\left(\frac{\lambda_n}{\delta_n\mu_\eta(n)}\right)-N_D^{\mu_\eta,\R}\left(\frac{\lambda_n}{\delta_n\mu_\eta(n)}\right)_-\right)\cdot \left(\frac{\lambda_n}{\delta_n\mu_\eta(n)}\right)^{-\frac 12 d_S}\\
\geq &\lim_{n\rightarrow\infty} 3^n\cdot\left(\frac{\lambda_n}{\delta_n\mu_\eta(n)}\right)^{-\frac 12 d_S}\\
\geq &\lim_{n\rightarrow\infty} 3^n\cdot\left(\tilde c_2 (\tfrac 3r)^n\right)^{-\frac 12 d_S}\\
= &\lim_{n\rightarrow \infty} 3^n \cdot \tilde c_2^{-\frac 12 d_S}\cdot 3^{-n}\\
= & \tilde c_2^{-\frac 12 d_S}>0
\end{align*}
The calculation for $\eta=1$ is analogous. This closes the proof of the main theorem of this work:
\begin{theorem}[Non-convergence on SSG]\ \\
Let $\R$ be a sequence of matching pairs that fulfils the conditions and $\mu=\mu_\eta$ with $\eta\in(0,1]$. Let $\beta>\frac 1{9r}$ if $\eta=1$. Then we have
\begin{align*}
\liminf_{x\rightarrow \infty} N_D^{\mu,\R}(x)x^{-\frac12 {d_S}} < \limsup_{x\rightarrow \infty} N_D^{\mu,\R}(x)x^{-\frac12 {d_S}}
\end{align*}
\end{theorem}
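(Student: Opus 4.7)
The strategy is to exploit the sequence of eigenvalues $\nu_n = \lambda_n/(\delta_n \mu_\eta(n))$ with multiplicity at least $3^n$ that was already extracted from the localized eigenfunctions $u_w^{(n)}$. The presence of such a jump of size $3^n$ in $N_D^{\mu,\R}$ at each $\nu_n$ forces the difference between the value of $N_D^{\mu,\R}(\nu_n)x^{-d_S/2}$ just before and just after $\nu_n$ to be bounded below by $3^n \cdot \nu_n^{-d_S/2}$. The whole point is that the definition of $d_S$ is exactly calibrated so that this quantity does \emph{not} go to zero as $n\to\infty$.

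First I would write, using the monotonicity of $N_D^{\mu,\R}$ and the fact that $\limsup$ dominates the value at any sequence and $\liminf$ is dominated by the left-limit at any sequence,
\begin{align*}
\limsup_{x\to\infty} N_D^{\mu,\R}(x)x^{-\frac12 d_S} - \liminf_{x\to\infty} N_D^{\mu,\R}(x)x^{-\frac12 d_S}
\geq \liminf_{n\to\infty}\bigl(N_D^{\mu,\R}(\nu_n) - N_D^{\mu,\R}(\nu_n)_-\bigr)\nu_n^{-\frac12 d_S},
\end{align*}
where the bracket is at least $3^n$ by the multiplicity bound. Then I would plug in the upper estimate $\nu_n \leq \tilde c_2 (3/r)^n$ (respectively $\nu_n \leq \tilde c_2 (\beta r)^{-n}$ when $\eta=1$) and simply verify the exponent arithmetic: since $d_S=\frac{\ln 9}{\ln 3-\ln r}$ for $\eta\in(0,1)$ we have $(3/r)^{d_S/2} = 3$, so $3^n \cdot ((3/r)^n)^{-d_S/2} = 1$, and the quantity above is bounded below by $\tilde c_2^{-d_S/2}>0$. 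The case $\eta = 1$ with $d_S = \frac{\ln 9}{-\ln(\beta r)}$ works identically, using $(\beta r)^{-d_S/2} = 3$ and the hypothesis $\beta > \frac{1}{9r}$ (which is what ensures $d_S$ is the correct exponent in Theorem~\ref{theo24}).

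The honest observation is that there is essentially no hard step left: all of the delicate work has been done upstream. The existence of pre-localized eigenfunctions (Lemma~\ref{lemdn}) gives localization, the rescaling argument turns them into $3^n$ mutually orthogonal localized eigenfunctions on the $n$-cells, and the $n$-independent two-sided bounds on $\lambda_n$ (namely $\frac{1}{16}\leq \lambda_n \leq \frac{6}{\kappa_1 r^3 \beta^3}$) were precisely what guaranteed that the $\nu_n$ grow at the rate dictated by $d_S$. The only thing a careful writer has to be wary of is that the sequence $\nu_n$ genuinely tends to infinity (immediate from $\lambda_n \geq 1/16$ and $\delta_n\mu_\eta(n)\to 0$) so that the $\liminf$/$\limsup$ at infinity really see these jumps; everything else is bookkeeping of constants.
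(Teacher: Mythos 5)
Your proposal is correct and follows essentially the same route as the paper: both exploit the jump of size at least $3^n$ of $N_D^{\mu,\R}$ at $\nu_n=\lambda_n/(\delta_n\mu_\eta(n))$, bound $\limsup-\liminf$ from below by $\bigl(N_D^{\mu,\R}(\nu_n)-N_D^{\mu,\R}(\nu_n)_-\bigr)\nu_n^{-\frac 12 d_S}$, and use the upper estimate $\nu_n\leq \tilde c_2(3/r)^n$ (resp.\ $\tilde c_2(\beta r)^{-n}$) together with the identity $(3/r)^{d_S/2}=3$ to get the uniform lower bound $\tilde c_2^{-\frac 12 d_S}>0$. Your remark that the real work lies in the upstream two-sided bounds on $\lambda_n$ matches the paper's own emphasis.
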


\section{Special cases with more symmetry}\label{special}
We were not able to show periodicity in the general setting. This is due to the very general setting of sequences of matching pairs. We have a lot of options to choose this sequence. This destroys the symmetry that we need to show periodicity. But we can look at some special sequences to give us back the symmetry. \\

One problem was, that the measures $\mu_\eta$ were not self-similar for $\eta\in (0,1)$. We need to get estimates on the $L^2$ norms. To avoid this we could choose the measure $\mu_1=\mu_l$ which by itself is self-similar in the sense that $\mu_l^{(n)}=\mu_l$ for all $n$. \\

With the estimates between $\E_\R$ and $\E_{\R^{(n)}}$ we reached the following estimates for the eigenvalue counting function in chapter~\ref{chapper}.
\begin{align*}
3^nN_D(c_1(\beta r)^nx)\leq N_D(x)\leq 3^n N_D(c_2(\beta r)^nx) + 3^{n+1} + N(\mathcal{E},D_{J_n},\mu,x)
\end{align*}
with $c_1<1<c_2$ for all $n$.\\

In general these constants $c_1$ and $c_2$ are not $1$. Even not asymptotically! However there is a case where they are $1$. This is the case if the quadratic forms $\E_\R$ and $\E_{\R^{(n)}}$ coincide. This can be achieved by choosing constant sequences of matching pairs with $r\in [\frac 13, \frac 35)$.
\begin{align*}
(r_i,\rho_i)=(r,\rho)  \ \forall i
\end{align*}
If the sequence is constant we have $\mathcal{R}=\mathcal{R}^{(n)} \ \forall n$ and thus
\begin{align*}
\mathcal{E}_{\mathcal{R}}=\mathcal{E}_{\mathcal{R}^{(n)}}
\end{align*}
This means we get the following rescaling of the eigenvalue counting function.
\begin{align*}
3N_D(\beta rx )\leq N_D(x)\leq 3N_D(\beta rx)+9+N(\mathcal{E}_\R,\mathcal{D}_{\R,J_1},\mu_l,x)
\end{align*}
We want to apply renewal theory to the Dirichlet eigenvalue counting function to show the existence of log-periodic behaviour. This version of the renewal theorem we use can be found in \cite{kig98}. This is a refinement of the version from \cite{kl93}. The original version is due to Feller \cite{fe66}.\\

\begin{theorem}[Renewal theorem, Kigami {\cite[Theorem A.1]{kig98}}]\label{renewal}\	 \\
Let $f$ be a measurable function on $\mathbb{R}$ with $f(t)\rightarrow 0$ as $t\rightarrow -\infty$. Suppose $f$ satisfies a renewal equation
\begin{align*} f(t)=\sum_{j=1}^Nf(t-m_jT)p_j+u(t)\end{align*}
where $m_1,m_2,\ldots,m_N$ are positive integers whose greatest common divider is $1$, $\sum_{j=1}^Np_j=1$ and $p_j>0$ for all $j$. Also assume that $\sum_{j=-\infty}^\infty |u_j(t)|$ converges uniformly on $[0,T]$, where $u_j(t)=u(t+jT)$ for $t\in[0,T]$. Set $f_n(t)=f(t+nT)$ for $n\in \mathbb{Z}$ and $G(t)=(\sum_{j=1}^N m_jp_j)^{-1} \sum_{j=-\infty}^\infty u_j(t)$. Then as $n\rightarrow \infty$, $f_n$ converges to $G$ uniformly on $[0,T]$. 

Moreover, set $Q(z)=(1-\sum_{j=1}^Np_jz^{m_j})/(1-z)$ and define $\beta=\min\{|z|:Q(z)=0\}$ and $m=\max\{multiplicity \ of\  Q(z)=0\  at\  q: |w|=\beta, Q(w)=0\}$. If there exist $C>0$ and $\alpha>1$ such that $|u(t)|\leq C\alpha^{-t}$ for all $t$, then, as $t\rightarrow \infty$, 
\begin{align*}
|G(t)-f(t)|=\begin{cases} \mathcal{O}(t^{m-1}\beta^{-t/T}) & if \ \alpha^T>\beta,\\
\mathcal{O}(t^m\alpha^{-t}) & if \ \alpha^T=\beta,\\
\mathcal{O}(\alpha^{-t}) & if \ \alpha^T<\beta.
\end{cases}
\end{align*}
\end{theorem}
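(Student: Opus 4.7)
The plan is to reduce the continuous renewal equation to a discrete one on the lattice $T\mathbb{Z}$ and then use generating-function (Z-transform) techniques. First I would fix $t\in[0,T]$ and view the sequence $f_n(t)=f(t+nT)$ as the primary object. Substituting into the renewal equation gives the vector recursion
\begin{align*}
f_n(t)=\sum_{j=1}^N p_j f_{n-m_j}(t)+u_n(t),
\end{align*}
valid for every $t\in[0,T]$, with $f_n(t)\to 0$ as $n\to-\infty$ (by the hypothesis on $f$). The GCD condition on the $m_j$ is there precisely to guarantee aperiodicity, so that $z=1$ is the only root of $1-\sum_j p_j z^{m_j}$ on the unit circle.

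Next I would form the generating functions $F(z,t)=\sum_{n\in\mathbb{Z}} f_n(t) z^n$ and $U(z,t)=\sum_n u_n(t) z^n$. The uniform convergence hypothesis $\sum_j |u_j(t)|<\infty$ makes $U(\cdot,t)$ continuous on a neighborhood of $|z|=1$, and the recursion becomes
\begin{align*}
\bigl(1-\textstyle\sum_j p_j z^{m_j}\bigr) F(z,t)=U(z,t),\qquad\text{i.e.}\qquad F(z,t)=\frac{U(z,t)}{(1-z)\,Q(z)}.
\end{align*}
Since $Q(1)=\sum_j m_j p_j>0$, the function $F(z,t)$ has a simple pole at $z=1$ with residue
\begin{align*}
\operatorname*{Res}_{z=1} F(z,t)=-\frac{U(1,t)}{Q(1)}=-\Bigl(\textstyle\sum_j m_j p_j\Bigr)^{-1}\sum_{j\in\mathbb{Z}} u_j(t)=-G(t).
\end{align*}
Extracting the $z^n$-coefficient via a contour integral $\frac{1}{2\pi i}\oint F(z,t)z^{-n-1}dz$ and pushing the contour across $|z|=1$, the residue at $z=1$ produces $G(t)$ and the remaining integral gives the error term; uniformity in $t\in[0,T]$ follows from the uniform convergence of $U(z,t)$.

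For the quantitative statement, the exponential decay $|u(t)|\leq C\alpha^{-t}$ lets one continue $U(\cdot,t)$ analytically to the disk $|z|<\alpha^T$. The remaining singularities of $1/Q(z)$ lie at $|z|=\beta$, with maximum multiplicity $m$. Depending on whether $\alpha^T<\beta$, $=\beta$, or $>\beta$, the dominant contribution comes from either the analyticity radius of $U$ or from the roots of $Q$ on $|z|=\beta$ (with a polynomial factor $t^{m-1}$ arising from the Jordan-block structure of the multiple root, and an extra $t$-factor in the borderline case). Standard residue calculus on the contour $|z|=\alpha^T$ or just inside $|z|=\beta$ then yields exactly the three cases in the theorem.

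The main obstacle I expect is the bookkeeping in the borderline case $\alpha^T=\beta$, where a pole of $1/Q$ meets the circle of analyticity of $U$ and one has to combine the residue contribution with a careful estimate of the integral over $|z|=\beta$, producing the extra $t^m\alpha^{-t}$ factor. The other nontrivial point is justifying the absolute convergence of the Laurent series defining $F(z,t)$ uniformly in $t$, which is where the hypothesis $\sum_j|u_j(t)|<\infty$ uniformly on $[0,T]$ is essential; without this one cannot legitimately interchange the summation and the contour integration.
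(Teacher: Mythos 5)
First, a point of reference: the paper does not prove this theorem at all --- it is imported verbatim from Kigami \cite[Theorem A.1]{kig98} (a refinement of \cite{kl93}, ultimately going back to Feller \cite{fe66}) --- so there is no in-paper proof to compare against. Your overall strategy, namely restricting to the lattice $t+T\mathbb{Z}$, deriving the discrete recursion $f_n=\sum_j p_jf_{n-m_j}+u_n$, and reading the asymptotics off the zero set of $1-\sum_jp_jz^{m_j}=(1-z)Q(z)$, is the standard and correct route.

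However, the central analytic step as written fails: the two-sided series $F(z,t)=\sum_{n\in\mathbb{Z}}f_n(t)z^n$ has an \emph{empty} annulus of convergence. Indeed, the conclusion itself asserts $f_n(t)\to G(t)$ as $n\to+\infty$, generically nonzero, so $\sum_{n\ge0}f_n(t)z^n$ can only converge for $|z|<1$; while for $n\to-\infty$ you only know $f_n(t)\to0$ with no rate, so $\sum_{n<0}f_n(t)z^n$ can only be guaranteed to converge for $|z|>1$. Consequently the identity $F(z,t)=U(z,t)/((1-z)Q(z))$, the ``simple pole at $z=1$,'' and the contour-pushing are not defined; moreover $\sum_j|u_j(t)|<\infty$ gives absolute convergence of $U(\cdot,t)$ only \emph{on} the unit circle, not on a neighborhood of it. The repair is to attach the generating function to the right object: iterate the recursion backwards and use $f_n\to0$ ($n\to-\infty$) to kill the remainder, obtaining the honest, absolutely convergent representation $f_n(t)=\sum_{k\ge0}a_ku_{n-k}(t)$, where $\sum_{k\ge0}a_kz^k=(1-\sum_jp_jz^{m_j})^{-1}$ is a genuine one-sided power series of radius $1$ with bounded nonnegative coefficients. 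The Erd\H{o}s--Feller--Pollard renewal theorem (this is where the gcd hypothesis enters) gives $a_k\to(\sum_jm_jp_j)^{-1}=:a_\infty$, and a partial-fraction decomposition of $1/((1-z)Q(z))$ gives $a_k-a_\infty=\mathcal{O}(k^{m-1}\beta^{-k})$ in terms of the roots of $Q$ (all of modulus $\ge\beta>1$). The first assertion then follows from $f_n(t)-G(t)=\sum_{k\ge0}(a_k-a_\infty)u_{n-k}(t)-a_\infty\sum_{k<0}u_{n-k}(t)$ together with the uniform summability of the $u_j$, and the three quantitative cases follow by splitting this sum using $|u_{n-k}(t)|\leq C\alpha^{-t-(n-k)T}$; your picture of the competition between the decay radius $\alpha^T$ of $u$ and the root radius $\beta$ of $Q$ (with the extra polynomial factors from multiple roots and from the borderline coincidence) is correct, but it must be run on this series rather than on a residue computation for a nonconvergent Laurent series.
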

We define
\begin{align*}
R(x)&:=N_D(x)-3N_D\left( \beta r x\right)\\
f(t)&:=e^{-td_S}N_D(e^{2t})\\
u(t)&:=e^{-td_S}R(e^{2t})\\
T&:=-\ln\sqrt{(\beta r)}
\end{align*}
We see that $f$ is measurable and $f(t)\rightarrow 0$ for $t\rightarrow -\infty$. With $N=3$, $m_j=1$ and $p_j=\frac 13$ for all $j$ we have
\begin{align*}
\sum_{j=1}^N f(t+\ln\sqrt{\beta r})\frac 13&=f(t+\ln\sqrt{\beta r})\\
&=e^{-(t+\ln\sqrt{\beta r})d_S}N_D(e^{2(t+\ln\sqrt{\beta r})})\\
&=e^{-td_S}\left(\beta r\right)^{-\frac {d_S}2}N_D(e^{2t}\beta r)\\
&=e^{-td_S}3N_D(e^{2t}\beta r)\\
&=e^{-td_S}(N_D(e^{2t})-R(e^{2t}))\\
&=f(t)-u(t)
\end{align*}
Therefore they fulfil the renewal equation.\\

We need to show that $\sum_{j=-\infty}^\infty |u(t+jT)|$ converges uniformly for all $t\in[0,T]$. The first Dirichlet-eigenvalue $\lambda_1^D$ is positive and thus $R(x)=0$ for all $x< \beta r \lambda_1^D$. This means there is a $j_0$ such that 
\begin{align*}
\sum_{j=-\infty}^\infty |u(t+jT)|=\sum_{j=j_0}^\infty |u(t+jT)| 
\end{align*}
Since $R(x)=\mathcal{O}(x^{\frac 12})$ we get a constant $c\geq 0$ such that \setcounter{equation}{0}
\begin{align}
u(t)&=e^{-d_S t}R(e^{2t})\leq c e^{-t(d_S-1)} \label{eqorder}
\end{align}
and thus
\begin{align*}
\sum_{j=-\infty}^\infty |u(t+jT)|&=\sum_{j=j_0}^\infty |u(t+jT)| \\
&\leq \sum_{j=j_0}^\infty c e^{-(t+jT)(d_S-1)}\\
&=ce^{-t(d_S-1)}\sum_{j=j_0}^\infty e^{-jT(d_S-1)}
\end{align*}
Since $d_S>1$ the sum converges independent of $t$ and thus we have uniform convergence. We have $Q(z)=1$ and thus $\beta=\infty$ which means we are in the third case of the theorem. With (\ref{eqorder}) we get
\begin{align*}
|G(t)-f(t)|\leq \tilde c e^{-t(d_S-1)}
\end{align*}
Substituting $x=e^{2t}$:
\begin{align*}
|G(\tfrac{\ln x}2)-x^{-\frac 12 d_S}N_D(x)|\leq \tilde cx^{-\frac 12 (d_S-1)}
\end{align*}
This means, that 
\begin{align*}
N_D(x)=G\left(\frac{\ln x}2\right)x^{\frac 12 d_S}+\mathcal{O}(x^{\frac 12})
\end{align*}
The function $G$ is periodic with period $T=-\ln\sqrt{\beta r}$.
With the results from chapter~\ref{chaple} we know that $G$ is nonconstant. \\[.2cm]

Another special case is if $\mathcal{R}$ is periodic. If it is not just constant we are not in the case that $r_n\rightarrow r$. But still we get the asymptotics. Let $\mathcal{R}$ be a sequence of matching pairs, such that 
\begin{align*}
\exists n\in\mathbb{N}: \ \mathcal{R}^{(n)}=\mathcal{R}
\end{align*}
With $\mu=\mu_l$ we get the following estimates for the eigenvalue counting functions:
\begin{align*}
3^{n} N_D(\beta^{n} \delta_n x)\leq N_D(x)\leq 3^{n} N_D(\beta^{n} \delta_n x)+ 3^{n+1}+ N(\mathcal{E},\mathcal{D}_{J_{n}},\mu_\eta,x)
\end{align*}
If $\delta_n^{\frac 1n}\in [\tfrac 13, \tfrac 35)$ and $\beta>(\delta_n^{\frac 1n}9)^{-1}$ we get the asymptotic growing  
\begin{align*}
d_S=\frac{\ln 9}{-\ln(\beta \delta_n^{\frac 1n})}
\end{align*}
In this case we can also apply the renewal theorem in Theorem~\ref{renewal} and the existence of localized eigenfunctions from chapter~\ref{chaple} and get that $N_D(x)x^{-\frac 12 d_S}$ does not converge. In particular there exists a non constant periodic function $G$ with period $-\tfrac 12\ln(\beta \delta_n^{\frac 1n})$ such that
\begin{align*}
N_D(x)=G\left(\frac{\ln x} 2\right)x^{\frac 12 d_S}+ \mathcal{O}(x^{\frac 12})
\end{align*}
It is easy to see, that this is the only case with $\mathcal{E}_{\mathcal{R}^{(n)}}=\mathcal{E}_\mathcal{R}$. That means, in all other cases the constants $c_1,c_2$ are not equal.

\section*{Acknowledgements}
Most of this work originated during a stay at Kyoto University. I want to thank Professor Jun Kigami for his hospitality and his support. Many of the ideas developed during numerous fruitful discussions with Professors Naotaka Kajino and Jun Kigami for which I am very grateful. This stay was supported by the  German Academic Exchange Service (DAAD) with a "DAAD-Doktorandenstipendium". I am very thankful for the assistance, since without it, this stay and thus this work would not have been possible.

\newpage

\end{document}